\newcommand{\NN}{\ensuremath{\mathbb N}}
\theoremstyle{plain}
\newtheorem{Th}{Theorem}[section]
\newtheorem{Prop}[Th]{Proposition}
\newtheorem{Co}[Th]{Corollary}
\newtheorem{Lem}[Th]{Lemma}
\newtheorem{Ex}[Th]{Example}
\theoremstyle{definition}
\newtheorem{Def}[Th]{Definition}
\newtheorem*{Ack}{Acknowledgement}
\theoremstyle{remark}
\newtheorem*{Rem}{Remark}
\newtheorem*{Con}{\underline{\bf{Convention}}}
\numberwithin{equation}{section}
\newcommand{\meins}{1'}
\newcommand{\mzwei}{2'}
\newcommand{\msechs}{6'}
\newcommand{\deins}{\mathbf 1}
\newcommand{\dzwei}{\mathbf 2}
\newcommand{\dmzwei}{\mathbf 2'}
\newcommand{\dmdrei}{\mathbf 3'}
\newcommand{\none}{\textcolor{white}{.}}
\begin{document}
\title{Classification of $Q$-homogeneous skew Schur $Q$-functions}
\author{Christopher Schure}
\address{\scshape Institut für Algebra, Zahlentheorie und Diskrete Mathematik, Leibniz Universität Hannover, Welfengarten 1, D-30167 Hannover; schure@math.uni-hannover.de}
\subjclass[2010]{Primary: 05E05; Secondary: 05E10}
\begin{abstract}
We classify the $Q$-homogeneous skew Schur $Q$-functions, i.e., those of the form $Q_{\lambda/\mu} = k \cdot Q_{\nu}$.
On the way we develop new tools that are useful also in the context of other classification problems for skew Schur $Q$-functions.
\par
\noindent \textbf{Keywords.} $Q$-homogeneous, skew Schur $Q$-function, Schur $Q$-function
\end{abstract}

\maketitle

\section{Introduction}
Schur functions form an important and well studied basis of the algebra of symmetric functions.
They appear in the study of the representations of the symmetric groups and the general linear groups.
In the subalgebra generated by the odd power sums, the Schur $Q$-functions form a basis which is less well studied but share similarities to the basis of Schur functions.
Stembridge showed in \cite{Stembridge} that Schur $Q$-functions play an analogous role for the irreducible spin characters of the symmetric groups as the Schur functions do for the ordinary irreducible characters of the symmetric groups.
In the decomposition of skew Schur $Q$-functions into Schur $Q$-functions, the shifted Littlewood-Richardson coefficients appear.
The very same coefficients appear in the decomposition of reduced Clifford products of spin characters into spin characters.
In the aforementioned paper, Stembridge proved a shifted analogue of the Littlewood-Richardson rule to calculate these coefficients.
We will classify the skew Schur $Q$-functions that are some multiple of a Schur $Q$-function by using this version of the Littlewood-Richardson rule (a different version is given by Cho in \cite{Cho}).
Our classification is an extension of
Salmasian's classification of skew Schur $Q$-functions that are equal to some Schur $Q$-function in \cite{Salmasian}.\par
Related classification results were published in the last years.
In \cite{Stembridge2}, Stembridge classified the multiplicity-free products of Schur functions.
Bessenrodt classified the $P$-multiplicity-free Schur $P$-functions (some multiple of the Schur $Q$-functions) in \cite{Bessenrodt} which is a shifted analogue of Stembridge's result.
A classification of multiplicity-free skew Schur functions can be found in \cite{Gutschwager} by Gutschwager and independently in \cite{ThomasYong} by Thomas and Yong in the context of Schubert calculus.
Bessenrodt and Kleshchev classified homogeneous skew Schur functions (that is, some multiple of a Schur function or a Schur function) in \cite{BessenrodtKleshchev}.
Our result is a shifted analogue of this result.\par
The structure of this note is as follows.
In the second section we state required definitions and known properties as well as a new criterion for amenable tableaux (which are the tableaux that are counted in Stembridge's version of the shifted Littlewood-Richardson rule).
In the third section we develop the tools required for the proof of the desired classification which includes the decomposition of specific skew Schur $Q$-functions into Schur $Q$-functions as well as some properties of the amenable tableaux with the lexicographically largest content and a bijection on skew diagrams that leaves their corresponding skew Schur $Q$-function unchanged.
In the last section we first examine the case of skew Schur $Q$-functions indexed by disconnected shifted diagrams before tackling the case of skew Schur $Q$-functions indexed by connected shifted diagrams to obtain our result, Theorem \ref{homogeneous}.\\

\section{Preliminaries}
\subsection{Partitions, diagrams and tableaux}
We will use notation compatible to \cite{Salmasian} and \cite{Stembridge}.\par
A \textbf{partition} is a tuple $\lambda = (\lambda_1, \lambda_2, \ldots, \lambda_n)$ where $\lambda_j \in \NN$ for all $1 \leq j \leq n$ and $\lambda_i \geq \lambda_{i+1} > 0$ for all $1 \leq i \leq n-1$.
The \textbf{length} of $\lambda$ is $\ell(\lambda) := n$.
A partition $\lambda$ is called a partition of $k$ if $|\lambda| := \lambda_1 + \lambda_2 + \ldots + \lambda_{\ell(\lambda)} = k$ where $|\lambda|$ is called the \textbf{size} of $\lambda$.
A \textbf{partition with distinct parts} is a partition $\lambda = (\lambda_1, \lambda_2, \ldots, \lambda_n)$ where $\lambda_i > \lambda_{i+1} > 0$ for all $1 \leq i \leq n-1$.
The set of partitions of $k$ with distinct parts is denoted by $DP_k$.
By definition, the empty partition $\emptyset$ is the only element in $DP_0$ and it has length $0$.
The \textbf{set of all partitions with distinct parts} is denoted by $DP := \bigcup_k DP_k$.
For some $\lambda \in DP$ the \textbf{shifted diagram} $D_{\lambda}$ is defined by $D_{\lambda} := \{(i,j) \mid 1 \leq i \leq \ell(\lambda), i \leq j \leq i+\lambda_i-1\}$.
\begin{Con}
In the following we will omit the adjective shifted.
This means whenever a diagram is mentioned it is always a shifted diagram.
\end{Con}
A diagram can be depicted as an arrangement of boxes where the coordinates of the boxes are interpreted in matrix notation.
\begin{Ex}
Let $\lambda = (6,5,2,1)$.
Then
$$D_{\lambda} = {\Yvcentermath1 \young(\none \none \none \none \none \none ,:\none \none \none \none \none ,::\none \none ,:::\none ).}$$
\end{Ex}
Let $\lambda, \mu \in DP$.
If $\ell(\mu) \leq \ell(\lambda)$ and $\mu_i \leq \lambda_i$ for all $1 \leq i \leq \ell(\mu)$ then the \textbf{skew diagram} is defined by $D_{\lambda/\mu} := D_{\lambda} \setminus D_{\mu}$.
Then the \textbf{size} is $|D_{\lambda/\mu}| = |D_{\lambda}| - |D_{\mu}|$.
Each edgewise connected part of the diagram is called a \textbf{component}.

The number of components of a diagram $D$ is denoted by $comp(D)$.
If $comp(D) = 1$ the diagram $D$ is called \textbf{connected}, otherwise it is called \textbf{disconnected}.\par
A \textbf{corner} of a diagram $D$ is a box $(x,y) \in D$ such that $(x+1,y), (x,y+1) \notin D$.

\begin{Con}
In the following, if components are numbered, the numbering is as follows: the first component is the leftmost component, the second component is the next component to the right of the first component etc.
\end{Con}

\begin{Ex}
Let $\lambda = (6,5,2,1)$ and $\mu = (4,3)$.
Then the diagram is
$$D_{\lambda/\mu} = {\Yvcentermath1 \young(::\none \none ,::\none \times ,\none \none ,:\times ).}$$
Its size is $|D_{\lambda/\mu}| = 7$.
The diagram $D_{\lambda/\mu}$ has two components where the first component consists of three boxes and the second component consists of four boxes.
The corners of $D_{\lambda/\mu}$ are the boxes marked $\times$.
\end{Ex}

We consider the alphabet $\mathcal{A} = \{1' < 1 < 2' < 2 < \ldots\}$.

\begin{Rem}
The letters $1, 2, 3, \ldots$ are called unmarked letters and the letters denoted by $1', 2', 3', \ldots$ are called marked letters.
For a letter $x$ of the alphabet $\mathcal{A}$ we denote the unmarked version of this letter with $|x|$.
\end{Rem}

\begin{Def}\label{tableaudef}
Let $\lambda, \mu \in DP$.
A \textbf{tableau} $T$ of shape $D_{\lambda/\mu}$ is a map $T: D_{\lambda/\mu} \rightarrow \mathcal{A}$ such that
\begin{enumerate}[a)]
	\item $T(i,j) \leq T(i+1,j)$, $T(i,j) \leq T(i,j+1)$ for all $i,j$,
	\item each column has at most one $k$ ($k = 1, 2, 3, \ldots$),
	\item each row has at most one $k'$ ($k' = 1', 2', 3', \ldots$).
\end{enumerate}
Let $c^{(u)}(T) = (c^{(u)}_1, c^{(u)}_2, \ldots)$ where $c^{(u)}_i$ denotes the number of all letters equal to $i$ in the tableau $T$ for each $i$.
Analogously, let $c^{(m)}(T) = (c^{(m)}_1, c^{(m)}_2, \ldots)$ where $c^{(m)}_i$ denotes the number of all letters equal to $i'$ in the tableau $T$ for each $i$.
Then the \textbf{content} is defined by $c(T) = (c_1, c_2, \ldots) := c^{(u)}(T) + c^{(m)}(T)$.
If there is some $k$ such that $c_k > 0$ but $c_j = 0$ for all $j > k$ then we omit all these $c_j$ from $c(T)$.
\end{Def}
\begin{Rem}
We depict a tableau $T$ of shape $D_{\lambda/\mu}$ by filling the box $(x,y)$ with the letter $T(x,y)$ for all $x, y$.
\end{Rem}
\begin{Ex}
Let $\lambda = (8,6,5,3,2)$ and $\mu = (5,2,1)$.
Then a tableau of shape $D_{\lambda/\mu}$ is
$$T = {\Yvcentermath1 \young(::\meins 12,\mzwei 224,2455,4\msechs 6,:67).}$$
We have $c(T) = (2,5,0,3,2,3,1)$.
\end{Ex}

\subsection{Schur $Q$-functions}
For $\lambda, \mu \in DP$ and a countable set of independent variables $x_1, x_2, \ldots$ the \textbf{skew Schur $Q$-function} is defined by
$$Q_{\lambda/\mu} := \sum_{T \in T(\lambda/\mu)}{x^{c(T)}}$$
where $T(\lambda/\mu)$ denotes the set of all tableaux of shape $D_{\lambda/\mu}$ and $x^{(c_1, c_2, \ldots, c_{\ell(c)})} := x_1^{c_1} x_2^{c_2} \cdots$ with $c_k := 0$ for $k > \ell(c)$.
If $D_{\mu} \nsubseteq D_{\lambda}$ then $Q_{\lambda/\mu} := 0$.
Since $D_{\lambda/\emptyset} = D_{\lambda}$, we denote $Q_{\lambda/\emptyset}$ by $Q_{\lambda}$.

\begin{Def}
Let $D$ be a diagram such that the $y^{\textrm{th}}$ column has no box but there are boxes to the right of the $y^{\textrm{th}}$ column and after shifting all boxes that are to the right of the $y^{\textrm{th}}$ column one box to the left we obtain a diagram $D_{\alpha/\beta}$ for some $\alpha, \beta \in DP$.
Then we call the $y^{\textrm{th}}$ column \textbf{empty} and the diagram $D_{\alpha/\beta}$ is obtained by removing the $y^{\textrm{th}}$ column.
Similarly, let $D$ be a diagram such that the $x^{\textrm{th}}$ row has no box but there are boxes below the $x^{\textrm{th}}$ row and after shifting all boxes that are below the $x^{\textrm{th}}$ row one box up and then all boxes of the diagram one box to the left we obtain a diagram $D_{\alpha/\beta}$ for some $\alpha, \beta \in DP$.
Then we call the $x^{\textrm{th}}$ row \textbf{empty} and the diagram $D_{\alpha/\beta}$ is obtained by removing the $x^{\textrm{th}}$ row.
\end{Def}

\begin{Def}
For $\lambda, \mu \in DP$ we call the diagram $D_{\lambda/\mu}$ \textbf{basic} if it satisfies the following properties for all $1 \leq i \leq \ell(\mu)$:
\begin{itemize}
	\item $D_{\mu} \subseteq D_{\lambda}$,
	\item $\ell(\lambda) > \ell(\mu)$,
	\item $\lambda_i > \mu_i$,
	\item $\lambda_{i+1} \geq \mu_i-1$.
\end{itemize}
This means that $D_{\lambda/\mu}$ has no empty rows or columns.
\end{Def}

For a given diagram $D$ let $\bar{D}$ be the diagram obtained by removing all empty rows and columns of the diagram $D$.
Since the restrictions of each box in a diagram is unaffected by removing empty rows and columns, there is a content-preserving bijection between tableaux of a given shape and tableaux of the respective shape obtained by removing empty rows and columns; thus we have $Q_D = Q_{\bar{D}}$.
Hence, we may restrict our considerations to partitions $\lambda$ and $\mu$ such that $D_{\lambda/\mu}$ is basic.\par
For some given skew diagram $D$ let the diagram obtained after removing empty rows and columns be $D_{\lambda/\mu}$ for some $\lambda, \mu \in DP$.
Then $Q_D$ is equal to the skew Schur $Q$-function $Q_{\lambda/\mu}$.

For a tableau $T$ of a diagram $D$ the \textbf{reading word} $w := w(T)$ is the word obtained by reading the rows from left to right beginning with the bottom row and ending with the top row.
The \textbf{length} $\ell(w)$ is the number of letters of the word $w$ and, thus, the number of boxes in $D$.\par
Let $(x(i),y(i))$ denote the box of the $i^{\textrm{th}}$ letter of the reading word $w(T)$.
It is the box that satisfies the property
$$|\{(u,v) \in D_{\lambda/\mu} \mid \text{either we have } u > x(i) \text{ or  we have } u = x(i) \text{ and } v \leq y(i)\}| = i.$$
For a reading word $w$ of length $n$ of a tableau the statistics $m_i(j)$ are defined as follows:
\begin{itemize}
	\item $m_i(0) = 0$ for all $i$.
	\item For $1 \leq j \leq n$ the statistic $m_i(j)$ is equal to the number of times $i$ occurs in the word $w_{n-j+1} \dots w_n$.
	\item For $n+1 \leq j \leq 2n$ we set $m_i(j) := m_i(n) + k(i)$ where $k(i)$ is the number of times $i'$ occurs in the word $w_1 \dots w_{j-n}$.
\end{itemize}
Note that $c^{(u)}_i = m_i(n)$ and $c^{(m)}_i = m_i(2n) - m_i(n)$.\par
As Stembridge remarked in \cite[before Theorem 8.3]{Stembridge}, the statistics $m_i(j)$ for some given $i$ can be calculated simultaneously by taking the word $w(T)$ and scan it first from right to left while counting the letters $i$ and afterwards scan it from left to right and adding the number of letters $i'$.
After the $j^{\textrm{th}}$ step of scanning and counting the statistic $m_i(j)$ is calculated.

\begin{Def}\label{amenable}
Let $k \in \NN$ and $w$ be a word of length $n$ consisting of letters from the alphabet $\mathcal{A}$.
The word $w$ is called $k$\textbf{-amenable} if it satisfies the following conditions:
\begin{enumerate}[a)]
	\item if $m_k(j) = m_{k-1}(j)$ then $w_{n-j} \notin \{k, k'\}$ for all $0 \leq j \leq n-1$,
	\item if $m_k(j) = m_{k-1}(j)$ then $w_{j-n+1} \notin \{k-1, k'\}$ for all $n \leq j \leq 2n-1$,
	\item if $j$ is the smallest number such that $w_j \in \{k', k\}$ then $w_j = k$,
	\item if $j$ is the smallest number such that $w_j \in \{(k-1)', k-1\}$ then $w_j = k-1$.
\end{enumerate}
A word $w$ is called \textbf{amenable} if it is $k$-amenable for all $k > 1$.
A tableau $T$ is called ($k$-)amenable if $w(T)$ is ($k$-)amenable.
\end{Def}
\begin{Rem}
Definition \ref{amenable} a) can be regarded as follows: Suppose that while scanning a word from right to left we have $m_k(j) = m_{k-1}(j)$ for some $j < n$.
Then the next letter that is scanned cannot be a $k'$ or $k$.\par
Similarly, \ref{amenable} b) can be regarded as follows: Suppose that while scanning a word from left to right we have $m_k(j) = m_{k-1}(j)$ for some $n \leq j < 2n$.
Then the next letter that is scanned cannot be a $k-1$ or $k'$.\par
Definition \ref{amenable} c) states that for a given amenable tableau $T$ the leftmost box of the lowermost row with boxes with entry from $\{k', k\}$ contains a $k$.
\end{Rem}

Clearly, Definition \ref{amenable} a) ensures $m_{k-1}(n) \geq m_k(n)$ for any $k$-amenable word of length $n$.
If $m_{k-1}(n) = m_k(n) > 0$ then while scanning the word from left to right a $k-1$ or a $k'$ is scanned before the first $(k-1)'$ in the word is scanned.
Thus, we have proven the following lemma.
\begin{Lem}\cite[Lemma 3.28]{Salmasian}\label{unmarked}
Let $w$ be a $k$-amenable word for some $k > 1$.
Let $n := \ell(w)$.
If $m_{k-1}(n) > 0$ then $m_{k-1}(n) > m_k(n)$.
\end{Lem}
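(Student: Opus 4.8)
The plan is to split the claim into two parts: the weak bound $m_{k-1}(n) \geq m_k(n)$, and then strictness under the hypothesis $m_{k-1}(n) > 0$. For the weak bound I would induct on $j$ along the right-to-left scan $j = 0, 1, \ldots, n$, maintaining the invariant $m_{k-1}(j) \geq m_k(j)$. The invariant holds at $j=0$ since both statistics vanish, and the only way it could fail when passing from $j$ to $j+1$ is if $m_{k-1}(j) = m_k(j)$ while the newly scanned letter $w_{n-j}$ equals $k$; but this is exactly what Definition \ref{amenable} a) forbids, since it gives $w_{n-j} \notin \{k,k'\}$ whenever $m_k(j) = m_{k-1}(j)$. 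Hence $m_{k-1}(n) \geq m_k(n)$, which is in any case already noted just before the statement.

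It then remains to exclude equality, so I would assume toward a contradiction that $m_{k-1}(n) = m_k(n) = t$ with $t > 0$ and examine the left-to-right scan $j = n, \ldots, 2n$, along which $m_i$ increases only upon reading a marked letter $i'$. The key observation is that the equality $m_k(j) = m_{k-1}(j) = t$ persists as long as no $(k-1)'$ has yet been read: before the first $(k-1)'$ the statistic $m_{k-1}$ cannot move off $t$, and while the two statistics agree Definition \ref{amenable} b) forbids the next letter from being $k'$, so $m_k$ cannot advance off $t$ either. On the other hand, the hypothesis $m_{k-1}(n) > 0$ forces an unmarked $k-1$ into the word, and Definition \ref{amenable} d) forces the first letter of the word lying in $\{k-1,(k-1)'\}$ to be that unmarked $k-1$; in particular it is read strictly before the first $(k-1)'$. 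At the step just before this $k-1$ is scanned we therefore still have $m_k(j) = m_{k-1}(j) = t$, so Definition \ref{amenable} b) asserts the next letter $w_{j-n+1}$ is not $k-1$ --- contradicting that it equals $k-1$. This rules out equality, and together with the weak bound yields $m_{k-1}(n) > m_k(n)$.

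The monotonicity induction for the weak bound is routine. The step I expect to require the most care is the persistence claim in the second half of the scan: one must verify simultaneously that $m_{k-1}$ remains at $t$ up to the first $(k-1)'$ and that b) pins $m_k$ at $t$ over the same range, so that the equality hypothesis of b) is genuinely available at the moment the $k-1$ supplied by d) is scanned. The main subtlety is getting the ranges of $j$ and the index shift $w_{j-n+1}$ exactly right; once this is set up, the argument uses d) only to locate a $k-1$ occurring before any $(k-1)'$, so the subcase in which no $(k-1)'$ occurs at all needs no separate treatment.
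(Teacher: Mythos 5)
Your proof is correct and follows essentially the same route as the paper, whose proof is the compressed remark immediately preceding the lemma: condition a) of Definition \ref{amenable} gives the weak inequality, and equality with $m_{k-1}(n)>0$ would force, via d), an unmarked $k-1$ to be scanned before the first $(k-1)'$ in the left-to-right phase, contradicting b). Your version just spells out the persistence-of-equality induction that the paper leaves implicit.
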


We will use the shifted version of the Littlewood-Richardson rule by Stembridge.

\begin{Prop}\cite[Theorem 8.3]{Stembridge}
For given $\lambda, \mu \in DP$ we have
$$Q_{\lambda/\mu} = \sum_{\nu \in DP}{f^{\lambda}_{\mu \nu}} Q_{\nu},$$
where $f^{\lambda}_{\mu \nu}$ is the number of amenable tableaux $T$ of shape $D_{\lambda/\mu}$ and content $\nu$.
\end{Prop}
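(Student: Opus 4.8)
The plan is to establish the identity by comparing coefficients in the basis $\{Q_\nu : \nu \in DP\}$. Since the skew Schur $Q$-function $Q_{\lambda/\mu}$ is a symmetric function lying in the span of the $Q_\nu$ (this follows from the fact, provable via Bender--Knuth-type involutions on the marked entries, that $Q_{\lambda/\mu}$ is symmetric in the $x_i$), it admits a unique expansion $Q_{\lambda/\mu} = \sum_{\nu} c^\lambda_{\mu\nu} Q_\nu$. It therefore suffices to show $c^\lambda_{\mu\nu} = f^\lambda_{\mu\nu}$. My strategy is to organise the set $T(\lambda/\mu)$ of all tableaux of shape $D_{\lambda/\mu}$ by a shifted rectification map and to read off the coefficients from the fibers of this map.

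First I would develop a shifted \emph{jeu de taquin}: an inward-sliding procedure on tableaux that moves a skew shape toward a straight shape one box at a time, respecting conditions a)--c) of Definition \ref{tableaudef} and the special role of marked letters on the main diagonal. The two properties I need are that each slide preserves the content $c(T)$ and leaves the skew Schur $Q$-function of the shape unchanged, and that the full rectification of a tableau to straight shape is \emph{well defined}, i.e. independent of the order in which inner corners are vacated. Granting this, write $\mathrm{rect}(T)$ for the rectification of $T$ and let $U_\nu$ denote the canonical straight tableau of shape $\nu$ whose $i$-th row is filled with the unmarked letter $i$. Because rectification preserves content and each $Q_\nu = \sum_{\mathrm{sh}(U)=\nu} x^{c(U)}$, grouping $T(\lambda/\mu)$ according to $\mathrm{rect}(T)$ yields
\[
Q_{\lambda/\mu} = \sum_{\nu} \ \sum_{\mathrm{sh}(U) = \nu} \bigl|\mathrm{rect}^{-1}(U)\bigr|\, x^{c(U)} .
\]
If one shows in addition that $\bigl|\mathrm{rect}^{-1}(U)\bigr|$ depends only on the shape $\nu$ of $U$ and not on $U$ itself --- a shifted analogue of dual equivalence, proved by exhibiting content-preserving bijections between the fibers over different straight tableaux of the same shape --- then the inner sum factors as $\bigl|\mathrm{rect}^{-1}(U_\nu)\bigr| \cdot Q_\nu$, giving $c^\lambda_{\mu\nu} = \bigl|\mathrm{rect}^{-1}(U_\nu)\bigr|$.

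It then remains to identify $\bigl|\mathrm{rect}^{-1}(U_\nu)\bigr|$ with the number of amenable tableaux of shape $D_{\lambda/\mu}$ and content $\nu$. Here is where the statistics $m_i(j)$ enter: I would show that a tableau $T$ rectifies to the canonical $U_\nu$ if and only if its reading word $w(T)$ is amenable. The word $w(U_\nu)$ is itself amenable, so the crux is to prove that amenability is invariant under every jeu de taquin slide, which I would verify by tracking how a single slide alters the scanning values $m_i(j)$ and checking that conditions a)--d) of Definition \ref{amenable} are maintained, with Lemma \ref{unmarked} controlling the relation between $m_{k-1}(n)$ and $m_k(n)$. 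Conversely, an amenable tableau cannot rectify to any straight tableau other than the canonical one of its content, because the lattice-type constraints force the rectified rows to be constant. Combining the two parts gives $\bigl|\mathrm{rect}^{-1}(U_\nu)\bigr| = f^\lambda_{\mu\nu}$ and hence the theorem.

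The main obstacle is the combinatorial foundation of the first step: establishing that shifted rectification is well defined and preserves the $Q$-function. Unlike the unshifted case, the slides interact subtly with the primed letters permitted only once per row and with the diagonal boxes, so confluence (independence of slide order) is genuinely delicate and is the technical heart of the argument; the constancy of fiber sizes across a fixed shape is of comparable difficulty and is really a statement about a shifted coplactic structure. An alternative route that packages these difficulties differently is to build queer-crystal operators directly on marked shifted tableaux, show they commute with rectification, and identify the amenable tableaux as the highest-weight vertices; then each crystal component of shape $\lambda/\mu$ contributes one $Q_\nu$ and the number of components of highest weight $\nu$ is exactly $f^\lambda_{\mu\nu}$.
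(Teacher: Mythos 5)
The paper does not prove this proposition at all: it is quoted from Stembridge \cite{Stembridge} (Theorem 8.3), so there is no internal argument to compare yours against. Your outline is in fact essentially the route Stembridge himself takes (building on Worley's and Sagan's shifted jeu de taquin): expand $Q_{\lambda/\mu}$ as a sum over tableaux, group the tableaux by rectification, use confluence and the constancy of fiber sizes over a fixed straight shape to factor out $Q_\nu$, and identify the fiber over the canonical tableau with the amenable (lattice-word) tableaux. So the strategy is sound and is the standard one.

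As a proof, however, what you have written is a roadmap rather than an argument: every genuinely hard step is introduced with ``I would show.'' Concretely, three things are asserted but not established. First, well-definedness of shifted rectification (independence of the order of inner-corner slides), which in the shifted setting requires a careful convention for slides through the main diagonal and for marked letters, and is a nontrivial theorem of Worley and Sagan. Second, the invariance of amenability (all four conditions of Definition \ref{amenable}, including the treatment of the first occurrence of $k$ versus $k'$ in the word) under a single slide; this is the technical core of Stembridge's Section 8 and cannot be waved through by ``tracking the $m_i(j)$.'' Third, the claim that $\lvert\mathrm{rect}^{-1}(U)\rvert$ depends only on the shape of $U$ is a shifted dual-equivalence statement (due to Haiman in full generality) and is of the same order of difficulty as the rule itself. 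Until these are supplied, the proposal does not constitute a proof; but since the paper treats the result as a black box from the literature, citing \cite{Stembridge} for precisely these ingredients would be the appropriate way to close the gaps.
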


For $\lambda \in DP$, the corresponding Schur $P$-function is defined by $P_{\lambda} := 2^{-\ell(\lambda)} Q_{\lambda}$.
In  \cite[Chapter 8]{Stembridge}, Stembridge showed that the numbers $f^{\lambda}_{\mu \nu}$ above also appear in the product of $P$-functions:
$$P_{\mu} P_{\nu} = \sum_{\lambda \in DP}{f^{\lambda}_{\mu \nu} P_{\lambda}}\:.$$
Using this, one easily obtains the equation $f^\lambda_{\mu\nu} = f^\lambda_{\nu\mu}$ for all $\lambda, \mu, \nu \in DP$.\par

\begin{Def}
A \textbf{border strip} is a connected (skew) diagram $B$ such that for each box $(x,y) \in B$ the box $(x-1,y-1) \notin B$.
The box $(x,y) \in B$ such that $(x-1,y) \notin B$ and $(x,y+1) \notin B$ is called the \textbf{first box} of $B$.
The box $(u,v) \in B$ such that $(u+1,v) \notin B$ and $(u,v-1) \notin B$ is called the \textbf{last box} of $B$.\par
A (possibly disconnected) diagram $D$ where all components are border strips is called a \textbf{broken border strip}.
Then the first box of the rightmost component is called the first box of $D$, and the last box of the leftmost component is called the last box of $D$.\par
A $(p,q)$\textbf{-hook} is a set of boxes
$$\{(u,v+q-1), \ldots, (u,v+1), (u,v), (u+1, v), \ldots, (u+p-1, v)\}$$
for some $u, v \in \NN$.
\end{Def}
\begin{Def}
Let $T$ be a skew shifted tableau of shape $D_{\lambda/\mu}$.
Define $T^{(i)}$ by
$$T^{(i)} := \{(x,y) \in D_{\lambda/\mu} \mid |T(x,y)| = i\}.$$
\end{Def}
\begin{Def}\label{fitting}
Let $T$ be a tableau.
If the last box of $T^{(i)}$ is filled with $i$ we call $T^{(i)}$ \textbf{fitting}.
\end{Def}
\begin{Rem}
A restatement of \ref{amenable} (c) (respectively, \ref{amenable} (d)) is that $T^{(k)}$ (respectively, $T^{(k-1)}$) is fitting.
\end{Rem}

The remark before Theorem 13.1 of \cite{HoffmanHumphreys} states that for a given tableau $T$ of shape $D_{\lambda/\mu}$ the absolute values of the diagonals from top left to bottom right grow, that is $|T(x,y)| < |T(x+1,y+1)|$ for all $x,y$ such that $(x,y), (x+1,y+1) \in D_{\lambda/\mu}$.
An easy consequence is that each component of $T^{(i)}$ is a border strip.
This is mentioned in the part after Corollary 8.6 in \cite{SaganStanley} as well as the fact that each component of $T^{(i)}$ has two possible fillings with entries of $\{i', i\}$ which differ only by the marking of the entry in the last box of this component.\par
We will now prove a criterion for $k$-amenability of a tableau that avoids the use of the reading word and can be used as definition of $k$-amenability of tableaux.
The following technical definition is required to be able to state Lemma \ref{checklist}.

\begin{Def}
Let $\lambda, \mu \in DP$ and let $T$ be a tableau of $D_{\lambda/\mu}$.
Then
$$\mathcal{S}^{\boxtimes}_{\lambda/\mu}(x,y) := \{(u,v) \in D_{\lambda/\mu} \mid u \leq x, v \geq y\},$$
$$\mathcal{S}^{\boxtimes}_T(x,y)^{(i)} := S^{\boxtimes}_{\lambda/\mu}(x,y) \cap T^{-1}(i) \text{ where $T^{-1}(i)$ denotes the preimage of $i$},$$
$$\mathcal{B}_T^{(i)}:=\{(x,y) \in D_{\lambda/\mu} \mid T(x,y) = i' \text{ and } T(x-1,y-1) \neq (i-1)'\},$$
$$\widehat{\mathcal{B}_T^{(i)}}:=\{(x,y) \in D_{\lambda/\mu} \mid T(x,y) = i' \text{ and } T(x+1,y+1) \neq (i+1)'\}$$
and $b_T^{(i)} = |\mathcal{B}_T^{(i)}|$ for all $i$.
Then let $\mathcal{B}_T^{(i)}(d)$ denote the set of the first $d$ boxes of $\mathcal{B}_T^{(i)}$.
\end{Def}

\begin{Lem}\label{checklist}
Let $\lambda, \mu \in DP$ and $n := |D_{\lambda/\mu}|$.
Let $T$ be a tableau of $D_{\lambda/\mu}$.
Then the tableau $T$ is $k$-amenable if and only if either $c(T)_{k-1} = c(T)_k = 0$ or else it satisfies the following conditions:
\begin{enumerate}[(1)]
	\item $c(T)^{(u)}_{k-1} > c(T)^{(u)}_k$;
	\item when $T(x,y) = k$ then $|\mathcal{S}^{\boxtimes}_T(x,y)^{(k-1)}| \geq |\mathcal{S}^{\boxtimes}_T(x,y)^{(k)}|$;
	\item for each $(x,y) \in \mathcal{B}_T^{(k)}$ we have $|\mathcal{S}^{\boxtimes}_T(x,y)^{(k-1)}| > |\mathcal{S}^{\boxtimes}_T(x,y)^{(k)}|$;
	\item if $d := b_T^{(k)}+c^{(u)}_k-c^{(u)}_{k-1}+1 > 0$ then there is an injective map $\phi: \mathcal{B}_T^{(k)}(d) \rightarrow \widehat{\mathcal{B}_T^{(k-1)}}$ such that if $(x,y) \in \mathcal{B}_T^{(k)}(d)$ and $(u,v) = \phi(x,y)$ then for all $u < r < x$ we have $T(r,s) \notin \{k-1,k'\}$ for all $s$ such that $(r,s) \in D_{\lambda/\mu}$;
	\item $T^{(k-1)}$ is fitting;
  \item if $c(T)_k > 0$ then $T^{(k)}$ is fitting.
\end{enumerate}
\end{Lem}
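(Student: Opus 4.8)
The plan is to reformulate $k$-amenability in terms of the running difference $g(j) := m_{k-1}(j) - m_k(j)$ and then to match each of the four reading-word conditions of Definition \ref{amenable} against the geometric conditions (1)--(6). In the first scanning phase ($0 \le j \le n$) the letter added is unmarked, so $g$ increases by $1$ at each unmarked $k-1$ and decreases by $1$ at each unmarked $k$; in the second phase ($n \le j \le 2n$) the letter added is marked, so $g$ increases at each $(k-1)'$ and decreases at each $k'$. Conditions a) and b) are precisely forbidden-successor rules at the zeros of $g$, so the whole of amenability is equivalent to: $g \ge 0$ on $[0,2n]$, no forbidden letter follows a zero of $g$, and $T^{(k)}, T^{(k-1)}$ are fitting. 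I first dispose of the alternative $c(T)_{k-1}=c(T)_k=0$, where a)--d) hold vacuously; when exactly one of the two counts is nonzero I check that the failure of (1) (which then reads $0>0$) is matched by a genuine failure of c) or d), so the analysis reduces to both counts being positive.

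For the first phase I read $g$ off box by box. If a box $(x,y)$ is scanned at step $j$, then $m_i(j)$ counts the unmarked $i$ strictly after $(x,y)$ in the reading order, i.e. in the region $\{u<x\}\cup\{u=x,\,v>y\}$, which splits as $(\mathcal{S}^{\boxtimes}_{\lambda/\mu}(x,y)\setminus\{(x,y)\})\,\dotcup\,\{u<x,\,v<y\}$. The structural input is a sublemma: no unmarked $k$ lies strictly north-west of a box of absolute value $k$. I would prove this from the diagonal-strictness $|T(x,y)|<|T(x+1,y+1)|$ of \cite{HoffmanHumphreys} together with the shifted geometry: following the diagonal through a hypothetical $(u,v)=k$ down to row $x$ or down column $y$ produces a box of absolute value $>k$ that some row or column forces to have absolute value $\le k$, the relevant box lying in $D_{\lambda/\mu}$ because the left-hand border of a shifted skew shape moves right by at most one box per row. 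Granting this, at an unmarked $k$-box one gets $m_k(j)=|\mathcal{S}^{\boxtimes}_T(x,y)^{(k)}|-1$ and $m_{k-1}(j)\ge|\mathcal{S}^{\boxtimes}_T(x,y)^{(k-1)}|$, so condition (2) is exactly what forces $g>0$ there; the same count at the marked $k'$-boxes recorded in $\mathcal{B}_T^{(k)}$ (where the box is not itself counted, explaining the strict inequality) gives condition (3), the clause $T(x-1,y-1)\neq(k-1)'$ isolating precisely the $k'$ that begin a marked run and must be tested. Condition (1) is the endpoint $g(n)>0$, which is Lemma \ref{unmarked}. The delicate point here is the converse: the strictly north-west region may still contain unmarked $k-1$'s that inflate $m_{k-1}$ and let a) hold while (2) or (3) fails; I would rule this out by locating the first offending box in reading order and showing, from the border-strip structure of $T^{(k-1)}$ and the inequalities already verified at later-scanned boxes, that $g$ is in fact forced to $0$ there, breaking a).

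By the Remark after Definition \ref{fitting}, conditions c) and d) are literally the statements that $T^{(k)}$ and $T^{(k-1)}$ are fitting, giving (6) and (5). The main obstacle is the second phase and its translation into the injection condition (4). Now $g$ decreases at each $k'$ scanned left to right and increases at each $(k-1)'$, and b) additionally forbids an unmarked $k-1$ right after a zero of $g$; I expect the threshold $d=b_T^{(k)}+c^{(u)}_k-c^{(u)}_{k-1}+1$ to count exactly the $k'$-boxes of $\mathcal{B}_T^{(k)}$ whose mark must be ``paid for in advance'' by an earlier $(k-1)'$ if $g$ is to stay nonnegative. The interval condition on $\phi$ (no entry of $\{k-1,k'\}$ in the rows $u<r<x$) is the requirement that the matched $(k-1)'$ is already scanned, and still unspent, when the offending $k'$ arrives. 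I would therefore prove the equivalence ``$g\ge 0$ across phase $2$ with no forbidden successor at a zero'' $\Leftrightarrow$ ``$\phi$ exists'' by a Hall-type greedy matching, constructing $\phi$ by sending each critical $k'$ to the nearest admissible $(k-1)'$ above it in $\widehat{\mathcal{B}_T^{(k-1)}}$ and deriving injectivity and the interval condition from condition (5) (that is, d)) together with the rigid two-valued filling of the border strips of $T^{(k-1)}$ and $T^{(k)}$. Assembling the two phases then yields the stated equivalence.
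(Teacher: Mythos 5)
Your overall strategy -- tracking $g(j)=m_{k-1}(j)-m_k(j)$, converting the scanning statistic at a box $(x,y)$ into a count over $\mathcal{S}^{\boxtimes}_{\lambda/\mu}(x,y)$ plus the strictly north-west region, identifying (5) and (6) with Definition~\ref{amenable}~(c),(d), and encoding the second scanning phase by the injection of (4) -- is exactly the route the paper takes. Your structural sublemma (no box of $T^{(k)}$ lies strictly north-west of another box of $T^{(k)}$, because $|T(u,v)|<|T(u+1,v+1)|\leq |T(x,y)|$ and the distinctness of the parts of $\mu$ keeps $(u+1,v+1)$ inside the diagram) is correct and is implicitly what the paper uses to obtain $m_k(n-i)=|\mathcal{S}^{\boxtimes}_T(x(i),y(i))^{(k)}|-1$ at an unmarked $k$.

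The difficulty is that the two places where the lemma actually has content are left as announcements rather than arguments. First, for the converse of (2)/(3) you correctly flag that unmarked $k-1$'s strictly north-west of the offending box can make $m_{k-1}(n-i)$ strictly larger than $|\mathcal{S}^{\boxtimes}_T(x,y)^{(k-1)}|$, so a geometric failure need not produce $g=0$ at that box; but ``locating the first offending box and showing $g$ is forced to $0$ there'' is precisely the step that requires work. Such a north-west $k-1$ at $(u,v)$ forces a box of $T^{(k)}$ at $(u+1,v+1)$, which by your own sublemma pins $(u,v)$ against row $x-1$ or column $y-1$; this is what drives the paper's case split on $T(x-1,y-1)$ and $T(x-1,y)$, in which the violation of Definition~\ref{amenable}~(a) migrates from the $k$ at $(x,y)$ to a $k'$ at $(x-1,y)$ when $T(x-1,y-1)=k-1$. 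Without carrying this out the converse is unproven. Second, the entire equivalence of condition (4) with Definition~\ref{amenable}~(b) -- why the threshold is exactly $d=b_T^{(k)}+c^{(u)}_k-c^{(u)}_{k-1}+1$, why pairs $(s,t),(s+1,t+1)$ with $T(s,t)=(k-1)'$ and $T(s+1,t+1)=k'$ are always both scanned before the critical $k'$ and hence cancel out of $g$, and why the clause forbidding $k-1$ and $k'$ in the rows strictly between $\phi(x,y)$ and $(x,y)$ is both necessary and sufficient for the matched $(k-1)'$ to be already scanned and still unspent -- is deferred to a ``Hall-type greedy matching'' that is never constructed. You would need to exhibit the injection in one direction and extract it from amenability in the other; as written, the proposal asserts the correspondence it is supposed to prove.
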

\begin{proof}
First we want to show that tableaux that satisfy these conditions are indeed\linebreak $k$-amenable.
Clearly, such a tableau is $k$-amenable if $c(T)_k = c(T)_{k-1} = 0$.
Hence, we assume that $c(T)_k + c(T)_{k-1} \geq 1$.\par
Lemma \ref{checklist}~(2) ensures that if $w_i = k$ then we have $m_{k-1}(n-i) \geq\linebreak
|\mathcal{S}^{\boxtimes}_T(x(i),y(i))^{(k-1)}| > |\mathcal{S}^{\boxtimes}_T(x(i),y(i))^{(k)}|-1 = m_k(n-i)$ since $T(x(i)-1,y(i)-1) \neq k$ if $(x(i)-1,y(i)-1) \in D_{\lambda/\mu}$.
Lemma \ref{checklist}~(3) ensures that if $w_i = k'$ and $(x(i),y(i)) \in \mathcal{B}_T^{(k)}$ then $m_{k-1}(n-i) > m_k(n-i)$.
If $w_i = k'$ and $(x,y) := (x(i),y(i)) \notin \mathcal{B}_T^{(k)}$ then $T(x-1,y-1) = (k-1)'$.
But then $T(x-1,y) \in \{k', k-1\}$.
If $T(x-1,y) = k-1$ then we have $m_{k-1}(n-j+1) > m_k(n-j+1)$
where $j$ is defined by $(x(j),y(j)) = (x-1,y)$.
But then $m_{k-1}(n-i) > m_k(n-i)$.
If $T(x-1,y) = k'$ then either $(x-1,y) \in \mathcal{B}_T^{(k)}$ or $T(x-2,y-1) = (k-1)'$.
Then we can repeat this argument until we find a box $(z,y)$ where $z < x$ such that either $T(z,y) = k-1$ or $(z,y) \in \mathcal{B}_T^{(k)}$.
Thus, it is impossible to have $m_{k-1}(i) = m_k(i)$ and $w_{n-i} = k'$ for some $i$.
Hence, we showed that Definition \ref{amenable}~(a) is satisfied.\par
Lemma \ref{checklist}~(1) ensures that we always have $m_{k-1}(n) > m_k(n)$.
Let $i$ be such that $w_i = k'$, $T(x(i)-1,y(i)-1) = (k-1)'$ and $m_{k-1}(n+i-1) > m_k(n+i-1)$.
Then let $j$ be such that $(x(j),y(j)) = (x(i)-1,y(i)-1)$.
We have $m_{k-1}(n+i)\linebreak
\geq m_k(n+i)$ and $T(x,z) > k'$ for all $y < z \leq \lambda_x+x-1$ (the rightmost box of this row is $(x,\lambda_x+x-1)$).
Also, we have $T(x-1,w) < (k-1)'$ for all $\mu_{x-1}+x-1 \leq w < y$ (the leftmost box of this row is $(x-1,\mu_{x-1}+x-1)$).
Thus, we have $m_{k-1}(n+l) \geq m_k(n+l)$ for all $i \leq l \leq j-1$.
Then $m_{k-1}(n+j) \geq m_k(n+j)+1 > m_k(n+j)$.
Hence, Definition \ref{amenable}~(b) has not been violated between $w_i$ and $w_j$.
By this argument, $k$-amenability of $T$ depends on the boxes $(x,y) \in \mathcal{B}_T^{(k)}$.
If $w_i = k'$ and $(x(i),y(i))$ is one of the last $c^{(u)}_{k-1}-c^{(u)}_k-1$ boxes of $\mathcal{B}_T^{(k)}$ then $m_{k-1}(n+i) > m_k(n+i)$ since $m_{k-1}(n) = m_k(n)+c^{(u)}_{k-1}-c^{(u)}_k$.
Let $w_i = k'$ and let the box $(x(i),y(i)) \in \mathcal{B}_T^{(k)}(b_T^{(k)}+c^{(u)}_k-c^{(u)}_{k-1}+1)$.
By Lemma \ref{checklist}~(4), there is some $j$ such that $w_j = (k-1)'$ and $\phi(x(i),y(i)) = (x(j),y(j))$.
We have $m_{k-1}(n+i)-m_k(n+i) \geq c^{(u)}_{k-1}-c^{(u)}_k-(c^{(u)}_{k-1}-c^{(u)}_k-1)-1 = 0$ where the last $-1$ comes from the scanned entry $k'$ in the box $(x(i),y(i))$.
Note that pairs of boxes $(s,t)$ and $(s+1,t+1)$ such that $T(s,t) = (k-1)'$ and $T(s+1,t+1) = k'$ do not change the difference $m_{k-1}(n+i) - m_k(n+i)$ because the letter $w_i = k'$ cannot be between these entries in the reading word and, hence, both letters of such pairs are scanned before we scan $w_i = k'$.
Also for every box $(v,w) \in \mathcal{B}_T^{(k)}(b_T^{(k)}+c^{(u)}_k-c^{(u)}_{k-1}+1)$ such that $v > x(i)$ Lemma \ref{checklist}~(4) ensures that $\phi(v,w)$ is not in a row above the $x(i)^{\textrm{th}}$ row or in the $x(i)^{\textrm{th}}$ row to the right of $(x(i),y(i))$.
Hence, $T(v,w) = k'$ and $T(\phi(v,w)) = (k-1)'$ are scanned before $w_i=k'$ and these entries do not change the difference $m_{k-1}(n+i) - m_k(n+i)$.
If $x(j) \geq x(i)$ then $m_{k-1}(n+i)-m_k(n+i) > 0$ because $w_j = (k-1)'$ is scanned before $w_i = k'$.
If $x(j) < x(i)$ and $m_{k-1}(n+i)-m_k(n+i) = 0$ then $w_l \notin \{k-1,k'\}$ for all $i < l < j$.
Thus, there is no $i$ such that $m_{k-1}(n+i-1) = m_k(n+i-1)$ and $w_i \in \{k-1,k'\}$.
Hence, we showed that Definition \ref{amenable}~(b) is satisfied.\par
Lemma \ref{checklist}~(5) and Lemma \ref{checklist}~(6) are restatements of Definition \ref{amenable}~(c) and Definition \ref{amenable}~(d), respectively (as mentioned in the remark after Definition \ref{fitting}).
In total these conditions ensure $k$-amenability.\par
Now we want to show that if one of these conditions is not satisfied then $T$ is not $k$-amenable.
We may assume that $a+b > 0$.\par
Suppose Lemma \ref{checklist}~(1) is not satisfied.
Then we have $m_{k-1}(n) \leq  m_k(n)$ which contradicts Lemma \ref{unmarked}.\par
Suppose Lemma \ref{checklist}~(2) is not satisfied.
Let $i$ be such that $w_i = k$ is the first scanned entry $k$ such that $(x,y) := (x(i),y(i))$ violates Lemma \ref{checklist}~(2).
Then $T(x-1,y) \neq k-1$ and $|\mathcal{S}^{\boxtimes}_T(x,y)^{(k-1)}| = |\mathcal{S}^{\boxtimes}_T(x,y)^{(k)}|-1$.
We have to distinguish the cases $T(x-1,y-1) \neq k-1$ and $T(x-1,y-1) = k-1$.
If $T(x-1,y-1) \neq k-1$ then $m_{k-1}(n-i) = |\mathcal{S}^{\boxtimes}_T(x,y)^{(k-1)}| = |\mathcal{S}^{\boxtimes}_T(x,y)^{(k)}|-1 = m_{k}(n-i)$ and $w_i = k$ which violates Definition \ref{amenable}~(a).
If $T(x-1,y-1) = k-1$ then $T(x-1,y) = k'$ and, therefore, $T(x,y+1) \neq k$.
Then for $j$ such that $(x(j),y(j)) = (x-1,y)$ we must have $m_{k-1}(n-j) = m_k(n-j)$.
But then we have $m_{k-1}(n-j) = m_k(n-j)$ and $w_j = k'$ which also violates Definition \ref{amenable}~(a).\par
Suppose Lemma \ref{checklist}~(3) is not satisfied.
Let $(x,y) \in \mathcal{B}_T^{(k)}$ be such that\linebreak
$|\mathcal{S}^{\boxtimes}_T(x,y)^{(k-1)}| \leq |\mathcal{S}^{\boxtimes}_T(x,y)^{(k)}|$.
If $T(x-1,y-1) = k-1$ then if $(x,y-1) \in D_{\lambda/\mu}$ we have that $k-1 = T(x-1,y-1) < T(x,y-1) < T(x,y) = k'$ which is impossible.
Hence $(x,y-1) \notin D_{\lambda/\mu}$ and $x = y$.
But then $(x,y) = (x,x)$ is the lowermost leftmost box of $T^{(k)}$ and, since $T(x,x) = k'$, this means that $T^{(k)}$ is not fitting which violates Definition \ref{amenable}~(d).
Thus, there is no box $(x,y) \in \mathcal{B}_T^{(k)}$ such that $T(x-1,y-1) = k-1$.
Hence, if $i$ is such that $(x,y) = (x(i),y(i))$ then $m_{k-1}(n-i) \leq m_k(n-i)$.
If $m_{k-1}(n-i) < m_k(n-i)$ then $T$ is not $k$-amenable.
If $m_{k-1}(n-i) = m_k(n-i)$ then $w_i = k'$ which violates Definition \ref{amenable}~(a).\par
Suppose Lemma \ref{checklist}~(4) is not satisfied.
Thus, $b_T^{(i)}+c^{(u)}_k-c^{(u)}_{k-1}+1 > 1$ and there is a box $(x,y) \in \mathcal{B}_T^{(k)}(b_T^{(i)}+c^{(u)}_k-c^{(u)}_{k-1}+1)$ such that each box of $\mathcal{B}_T^{(k)}(b_T^{(i)}+c^{(u)}_k-c^{(u)}_{k-1}+1)$ that is below the $x^{\textrm{th}}$ row can be mapped to a different box with the given property of Lemma \ref{checklist}~(4) but $(x,y)$ cannot be mapped in this way.
If $i$ is such that $(x,y) = (x(i),y(i))$ then $m_{k-1}(n+i) = m_k(n+i)$ since $m_{k-1}(n+i) - m_k(n+i) = c^{(u)}_{k-1}-c^{(u)}_k-(b_T^{(i)}-(b_T^{(i)}+c^{(u)}_k-c^{(u)}_{k-1}+1))-1 = 0$ and, again, pairs of boxes $(s,t)$ and $(s+1,t+1)$ such that $T(s,t) = (k-1)'$ and $T(s+1,t+1) = k'$ do not change the difference $m_{k-1}(i) - m_k(i)$ as well as each box $(v,w) \in \mathcal{B}_T^{(k)}(b_T^{(k)}+c^{(u)}_k-c^{(u)}_{k-1}+1)$ such that $v > x$ that can be mapped to a different box with the given property of Lemma \ref{checklist}~(4) since $T(u,v) = k'$ and $T(\phi(u,v)) = (k-1)'$ are both scanned before the letter $w_i = k'$.
Since the box $(x,y)$ cannot be mapped to a box with the given property of Lemma \ref{checklist}~(4), this means that either there is some $l > i$ such that $m_{k-1}(n+l-1) = m_k(n+l-1)$ and $w_l \in \{k-1,k'\}$, which violates Definition \ref{amenable}~(b), or we have $m_{k-1}(n-i) = 0$ and $w_i = T(x(i),y(i)) = T(x,y) = k'$ which violates Definition \ref{amenable}~(a).\par
It is clear by definition that a tableau is not $k$-amenable if Lemma \ref{checklist}~(5) and Lemma \ref{checklist}~(6) are not satisfied.\par
Thus, we showed that the $k$-amenable tableaux are precisely the ones that satisfy the conditions in Lemma \ref{checklist}.
\end{proof}
\begin{Ex}
Let
$${\Yvcentermath1 T = \young(\times \times \times \times \times \times \times \times \meins 11,:\times \times \times \times \times \times \meins \mzwei 2,::\times \times \times \times \times 1,:::\times \times \times \times \mzwei ,::::\times \meins 12,:::::1\mzwei ,::::::2)}$$
be a tableau of shape $D_{(11,9,6,5,4,2,1)/(8,6,5,4,1)}$.
We will check the conditions of Lemma \ref{checklist} for $k = 2$ in the following.
We have $c(T)^{(u)}_1 = 5 > 3 = c(T)^{(u)}_2$.
Since $T^{-1}(2) = \{(2,10), (5,8), (7,7)\}$, we need to check condition (2) of Lemma \ref{checklist} for these boxes.
We have $|\mathcal{S}^{\boxtimes}_T(2,10)^{(1)}| = 2 \geq 1 = |\mathcal{S}^{\boxtimes}_T(2,10)^{(2)}|$, $|\mathcal{S}^{\boxtimes}_T(5,8)^{(1)}| = 3 \geq 2 = |\mathcal{S}^{\boxtimes}_T(5,8)^{(2)}|$ and $|\mathcal{S}^{\boxtimes}_T(7,7)^{(1)}| = 4 \geq 3 = |\mathcal{S}^{\boxtimes}_T(7,7)^{(2)}|$.
Since $\mathcal{B}_T^{(2)} = \{(2,9), (4,8)\}$, we need to check condition (3) of Lemma \ref{checklist} for these boxes.
We have $|\mathcal{S}^{\boxtimes}_T(2,9)^{(1)}| = 2 > 1 = |\mathcal{S}^{\boxtimes}_T(2,9)^{(2)}|$ and $|\mathcal{S}^{\boxtimes}_T(4,8)^{(1)}| = 3 > 1 = |\mathcal{S}^{\boxtimes}_T(4,8)^{(2)}|$.
Since $d := 2+3-5+1 = 1$, we have to find a map as in condition (4) of Lemma \ref{checklist} for the box $(2,9)$.
Such a map is $\phi((2,9)) = (2,8)$.
Another one is $\phi((2,9)) = (1,9)$.
Clearly, $T^{(1)}$ and $T^{(2)}$ are fitting.
Hence, the tableau $T$ is $2$-amenable.
\end{Ex}

It is easy to check that the conditions in the following corollary are included in the conditions of Lemma \ref{checklist}.
\begin{Co}\label{checklistco}
Let $\lambda, \mu \in DP$.
Let $T$ be a tableau of shape $D_{\lambda/\mu}$ such that either $c(T)_k = c(T)_{k-1} = 0$ or else it satisfies the following conditions:
\begin{enumerate}[(1)]
	\item there is some box $(x,y)$ such that $T(x,y) = k-1$ and $T(z,y) \neq k$ for all $z > x$;
	\item if $T(x,y) = k$ then there is some $z < x$ such that $T(z,y) = k-1$;
	\item if $T(x,y) = k'$ then $T(x-1,y-1) = (k-1)'$;
	\item $T^{(k-1)}$ is fitting;
  \item if $c(T)_k > 0$ then $T^{(k)}$ is fitting.
\end{enumerate}
Then the tableau is $k$-amenable.
\end{Co}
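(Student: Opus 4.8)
The plan is simply to check that each of the five hypotheses of Corollary \ref{checklistco} forces the corresponding condition in Lemma \ref{checklist} for the same index $k$; once all six conditions of that lemma are verified, $k$-amenability of $T$ follows at once. If $c(T)_k = c(T)_{k-1} = 0$ then both statements reduce to their trivial case and there is nothing to prove, so I would assume $c(T)_k + c(T)_{k-1} > 0$. Observe first that conditions (4) and (5) of the corollary are literally conditions (5) and (6) of Lemma \ref{checklist}, so these two require no argument.

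The substantive verifications are Lemma \ref{checklist} (1) and (2), and both rest on the same column-injection idea together with tableau axiom b), that each column contains at most one unmarked $k$. For Lemma \ref{checklist} (2), fix $(x,y)$ with $T(x,y) = k$ and consider the region $\mathcal{S}^{\boxtimes}_{\lambda/\mu}(x,y) = \{(u,v) \mid u \leq x,\ v \geq y\}$. For every box $(x',y')$ in this region carrying an unmarked $k$, Corollary \ref{checklistco} (2) supplies a box $(z',y')$ with $z' < x'$ and $T(z',y') = k-1$; since $z' < x' \leq x$ and $y' \geq y$, this box again lies in $\mathcal{S}^{\boxtimes}_{\lambda/\mu}(x,y)$. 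As the unmarked $k$'s in the region occupy distinct columns, so do their assigned $(k-1)$'s, and the assignment is injective. Hence $|\mathcal{S}^{\boxtimes}_T(x,y)^{(k)}| \leq |\mathcal{S}^{\boxtimes}_T(x,y)^{(k-1)}|$, which is exactly Lemma \ref{checklist} (2). Running the same injection globally gives $c(T)^{(u)}_k \leq c(T)^{(u)}_{k-1}$. Corollary \ref{checklistco} (1) then exhibits a column containing an unmarked $k-1$ but no unmarked $k$ at all: below the chosen $k-1$ there is no $k$ by hypothesis, and weakly above it the entries are $\leq k-1 < k$ since column entries increase downward. This column witnesses that the inclusion of the $k$-columns into the $(k-1)$-columns is strict, so $c(T)^{(u)}_{k-1} > c(T)^{(u)}_k$, i.e. Lemma \ref{checklist} (1).

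It remains to dispatch conditions (3) and (4) of Lemma \ref{checklist}, where Corollary \ref{checklistco} (3) does the work: it states that every box with entry $k'$ has $(k-1)'$ on its upper-left diagonal, which is precisely the assertion that $\mathcal{B}_T^{(k)} = \emptyset$, so $b_T^{(k)} = 0$ and Lemma \ref{checklist} (3) holds vacuously. With $b_T^{(k)} = 0$ and the strict inequality $c(T)^{(u)}_{k-1} > c(T)^{(u)}_k$ just established, the quantity $d = b_T^{(k)} + c(T)^{(u)}_k - c(T)^{(u)}_{k-1} + 1$ satisfies $d \leq 0$, so the hypothesis of Lemma \ref{checklist} (4) is never triggered and that condition is vacuous as well. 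Having verified conditions (1)--(6) of Lemma \ref{checklist}, I conclude that $T$ is $k$-amenable.

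I expect no genuine obstacle here: the only place demanding care is making the column assignment in the second paragraph simultaneously injective and contained in the region $\mathcal{S}^{\boxtimes}_{\lambda/\mu}(x,y)$, and confirming that the distinguished column from Corollary \ref{checklistco} (1) really contains no unmarked $k$ (which needs the downward monotonicity of the column above the chosen $k-1$ in addition to the hypothesis ruling out a $k$ below it).
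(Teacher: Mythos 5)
Your proposal is correct and takes the same route the paper intends: the paper gives no proof beyond asserting that the corollary's conditions are ``included in'' those of Lemma \ref{checklist}, and your verification spells out exactly that implication. The column-injection argument (using tableau axiom b)) for conditions (1) and (2) of Lemma \ref{checklist}, together with the observation that Corollary \ref{checklistco}~(3) forces $\mathcal{B}_T^{(k)} = \emptyset$ so that conditions (3) and (4) of the lemma hold vacuously, is a sound and complete filling-in of the details the paper leaves to the reader.
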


In the following, we will need a specific tableau $T_{\lambda/\mu}$ for a given diagram $D_{\lambda/\mu}$ that was constructed by Salmasian.

\begin{Def}\cite[before Lemma 3.5]{Salmasian}\label{Salmasian'salgorithm}
Let $D_{\lambda/\mu}$ be a skew diagram.
The tableau $T_{\lambda/\mu}$ is determined by the following algorithm:
\begin{enumerate}[(1)]
	\item Set $k = 1$ and $U_1(\lambda/\mu) = D_{\lambda/\mu}$.
	\item Set $P_k = \{(x,y) \in U_k(\lambda/\mu) \mid (x-1,y-1) \notin U_k(\lambda/\mu)\}$.
	\item For each $(x,y) \in P_k$ set $T_{\lambda/\mu}(x,y) = k'$ if $(x+1,y) \in P_k$, otherwise set $T_{\lambda/\mu}(x,y) = k$.
	\item Let $U_{k+1}(\lambda/\mu) = U_k(\lambda/\mu)\setminus P_k$.
	\item Increase $k$ by one, and go to (2).
\end{enumerate}
We also define $P_i(\lambda/\mu):=P_i$, for $i\ge 1$.
\end{Def}
\begin{Ex}
For $\lambda = (6,5,3,2)$ and $\mu = (4,1)$ we have
$$T_{\lambda/\mu} = {\Yvcentermath1 \young(::\meins 1,\meins 112,1\mzwei 2,:23).}$$
\end{Ex}
Salmasian proved the amenability of $T_{\lambda/\mu}$ in \cite[Lemma 3.9]{Salmasian}.
Also Corollary \ref{checklistco} proves amenability of $T_{\lambda/\mu}$ for all $\lambda, \mu \in DP$ since $T_{\lambda/\mu}$ is a prototype of a tableau satisfying the conditions of that corollary.

\section{Properties of the decomposition of skew Schur $Q$-functions}

Before we can start to answer the question which skew Schur $Q$-functions are $Q$-homogeneous we need to prove some results that will be important in the next section.\par
We want to decompose $Q_{\lambda/(n)}$ for any $1 \leq n \leq \lambda_1$, in particular for the case $n = \lambda_1-1$, which will be applied in Theorem \ref{notcon}.

\begin{Def}
Let $\lambda \in DP$.
Then the \textbf{border} is defined by
$$B_{\lambda} := \{(x,y) \in D_{\lambda} \mid (x+1,y+1) \notin D_{\lambda}\}.$$
Define $B_{\lambda}^{(n)} := \{D_{\lambda/\mu} \mid D_{\lambda/\mu} \subseteq B_{\lambda} \text{ and } |D_{\lambda/\mu}| = n\}$.
\end{Def}

\begin{Def}\label{E}
Let $\lambda \in DP$.
Define $E_{\lambda}$ to be the set of all partitions whose diagram is obtained after removing a corner in $D_{\lambda}$.
\end{Def}

Using the set $B_{\lambda}^{(n)}$ we can describe the decomposition of $Q_{\lambda/(n)}$ in general; the set $E_{\lambda}$ is required for a simpler version of the special case $n = 1$.
We will describe and prove the decomposition of $Q_{\lambda/(n)}$
and state the special cases $n = \lambda_1-1$ and $n = 1$ explicitly.

\begin{Prop}\label{lambda/n}
Let $\lambda \in DP$ and $1 \leq n \leq \lambda_1$ be an integer.
Then
$$Q_{\lambda/(n)} = \sum_{D_{\lambda/\nu} \in B_{\lambda}^{(n)} \hspace{1ex} (D_{\nu} \subseteq D_{\lambda})}{2^{comp(D_{\lambda/\nu})-1} Q_{\nu}}.$$
In particular,
$$Q_{\lambda/(\lambda_1-1)} = \sum_{(x,y) \in B^{\times}_{\lambda}}{c^{(x,y)}_{B_{\lambda}} Q_{D_{\mu} \cup \{(x,y)\}}}$$
where $D_{\mu} = D_{\lambda} \setminus B_{\lambda}$, $B^{\times}_{\lambda} := \{(x,y) \in B_{\lambda} \mid (x-1,y) \notin B_{\lambda} \text{ and } (x,y-1) \notin B_{\lambda}\}$ and
$$c^{(x,y)}_{B_{\lambda}} = \begin{cases}
1 &\mbox{if $(x,y)$ is the first or last box of $B_{\lambda}$} \\
2 &\mbox{otherwise},
\end{cases}$$
and
$$Q_{\lambda/(1)} = \sum_{\nu \in E_{\lambda}}{Q_{\nu}}.$$
\end{Prop}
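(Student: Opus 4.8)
The plan is to prove the three assertions in Proposition~\ref{lambda/n} by applying Stembridge's shifted Littlewood--Richardson rule to $Q_{\lambda/(n)}$ and analyzing the amenable tableaux of shape $D_{\lambda/(n)}$. By the rule, $Q_{\lambda/(n)} = \sum_{\nu} f^{\lambda}_{(n)\nu} Q_{\nu}$, so everything reduces to computing $f^{\lambda}_{(n)\nu}$, the number of amenable tableaux of shape $D_{\lambda/(n)}$ and content $\nu$. The key structural observation is that $D_{\lambda/(n)}$ is obtained by deleting the first $n$ boxes of the top row; since $(n)$ is a single row, the shape $D_{\lambda/(n)}$ is controlled by the border $B_{\lambda}$, and an amenable filling is heavily constrained because the content must be a partition with distinct parts and the diagonals of absolute values strictly increase.

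\textbf{Setting up the correspondence.}
First I would observe that since $Q_{\lambda/(n)}$ expands with coefficients $f^{\lambda}_{(n)\nu}$, I can instead use the symmetry $f^{\lambda}_{(n)\nu} = f^{\lambda}_{\nu(n)}$ proved in the excerpt, so that $f^{\lambda}_{(n)\nu}$ counts amenable tableaux of shape $D_{\lambda/\nu}$ with content $(n)$, i.e. fillings using only the letters $1'$ and $1$. Such a filling exists only when $D_{\lambda/\nu}$ is contained in the border $B_{\lambda}$ (so that the strict-diagonal-growth condition $|T(x,y)| < |T(x+1,y+1)|$ can hold with a single absolute value), and when $|D_{\lambda/\nu}| = n$; this explains precisely the index set $B_{\lambda}^{(n)}$. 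I would then count, for each such $\nu$, the number of amenable fillings of $D_{\lambda/\nu}$ by $\{1',1\}$. Amenability for $k=1$ degenerates (conditions involving $k-1=0$ are vacuous), so the only real constraints are the tableau conditions from Definition~\ref{tableaudef}: at most one $1$ per column, at most one $1'$ per row, together with the requirement that the filling be weakly increasing. The standard fact recalled after Corollary~\ref{checklistco} is that each component that is a border strip admits exactly two fillings by $\{i',i\}$, differing only in the marking of the last box; here $i=1$. Applying this componentwise, a broken border strip with $c := comp(D_{\lambda/\nu})$ components has $2^{c}$ candidate fillings, but the amenability constraint (equivalently, that the reading word behaves correctly; concretely condition (c), that the first relevant letter is unmarked, and the first-box condition) removes one degree of freedom overall, yielding $2^{c-1}$ valid amenable tableaux. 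This gives the general formula.

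\textbf{The two special cases.}
For $n = \lambda_1 - 1$, I would note that deleting $\lambda_1-1$ boxes from the border forces $\nu$ to be $\mu$ with exactly one extra box restored, where $D_{\mu} = D_{\lambda}\setminus B_{\lambda}$; the restored box must be an ``inner corner'' of the border, i.e. a box $(x,y)\in B_\lambda$ with $(x-1,y),(x,y-1)\notin B_\lambda$, which is exactly the set $B^{\times}_{\lambda}$. Each choice of restored box splits the border into at most two border-strip components, so $comp(D_{\lambda/\nu}) \in \{1,2\}$, giving the coefficient $2^{comp-1}\in\{1,2\}$; the value $1$ occurs precisely when the restored box is at an end of the border (the first or last box), which is where one of the two flanking strips is empty. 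This is exactly $c^{(x,y)}_{B_\lambda}$. For $n=1$, deleting a single border box from $D_\lambda$ amounts to removing a corner, so $\nu$ ranges over $E_{\lambda}$ by Definition~\ref{E}; each single box is a one-component border strip, giving $2^{1-1}=1$ and hence the clean sum $\sum_{\nu\in E_\lambda} Q_\nu$.

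\textbf{Main obstacle.}
I expect the crux to be the precise count $2^{comp-1}$, i.e. showing that exactly one of the $2^{comp}$ componentwise fillings fails amenability (equivalently, establishing that the marking choices across components are constrained by exactly one linear relation). The subtlety is that amenability for $k=1$ is not literally covered by conditions (a)--(d) with $k=1$, so I would need to argue directly from the reading-word statistics $m_1(j)$ and the first-box/fitting conditions, checking that the constraint couples the markings of the last boxes of the components in exactly one way, so that precisely half of the $2^{comp}$ fillings survive. Pinning down this single relation carefully — and confirming it degenerates correctly to coefficient $1$ at the ends of the border in the $n=\lambda_1-1$ case — is where the real work lies.
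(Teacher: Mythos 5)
Your proposal is correct and follows essentially the same route as the paper: apply the symmetry $f^{\lambda}_{(n)\nu}=f^{\lambda}_{\nu(n)}$, observe that a content-$(n)$ filling of $D_{\lambda/\nu}$ forces $D_{\lambda/\nu}\subseteq B_{\lambda}$, and count the componentwise $\{1',1\}$-fillings of the resulting broken border strip, with the single amenability constraint (Definition~\ref{amenable}(d) for $k=2$, i.e.\ the last box of the leftmost component carries an unmarked $1$) cutting the $2^{comp}$ candidates down to $2^{comp-1}$. The ``main obstacle'' you flag is already resolved by exactly this observation, so no further work on the reading-word statistics is needed.
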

\begin{proof}
Since $f^{\lambda}_{(n) \nu} = f^{\lambda}_{\nu (n)}$, we need to look at tableaux of shape $D_{\lambda/\nu}$ and content $(n)$.
These $n$ entries from $\{1', 1\}$ must be in the boxes of $B_{\lambda}$.
Hence, $D_{\lambda/\nu} \in B_{\lambda}^{(n)}$.
Thus, the constituents of $Q_{\lambda/(n)}$ with a non-zero coefficient are $Q_{\nu}$ such that $D_{\lambda/\nu} \in B_{\lambda}^{(n)}$.\par
Each component of $D$ can have two fillings that differ by the marking of the entry of the last box.
By definition of amenability, the last box of $D_{\lambda/\nu}$ must contain a $1$.
Thus, for each component of $D_{\lambda/\nu}$ except for the first one there are two possibilities how to fill the last box, giving the stated coefficient.
\end{proof}

For $\lambda, \mu \in DP$ the \textbf{lexicographical order} $\leq$ in $DP$ is defined as follows: $\lambda \leq \mu$ whenever either $\lambda = \mu$ or there is some $k$ such that $\lambda_i = \mu_i$ for $1 \leq i \leq k$ and $\lambda_{k+1} < \mu_{k+1}$ where $\lambda_k := 0$ if $k > \ell(\lambda)$.\par
The tableau $T_{\lambda/\mu}$ is one of the tableaux of $D_{\lambda/\mu}$ which have the lexicographical largest content.
We are also able to describe how many other tableaux have the same content as $T_{\lambda/\mu}$ and, hence, we can give the coefficient of the constituent indexed by the lexicographically largest partition, that is $c(T_{\lambda/\mu})$.
We want to show both statements now.
In the following, the sets $P_i=P_i(\lambda/\mu)$ are always the sets
arising in the construction of $T_{\lambda/\mu}$.

\begin{Lem}\label{lexlargest}
We have $c(T) \leq c(T_{\lambda/\mu})$ for all amenable tableaux $T$ of shape $D_{\lambda/\mu}$.
However, if $c(T) = c(T_{\lambda/\mu})$ then $T^{(i)} = P_i$.
\end{Lem}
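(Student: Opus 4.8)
The plan is to prove both assertions simultaneously by induction on the value $k$, the decisive tool being the diagonal-growth property $|T(x,y)| < |T(x+1,y+1)|$ which holds for \emph{every} tableau of shape $D_{\lambda/\mu}$ (the remark before Theorem 13.1 of \cite{HoffmanHumphreys}). First I would record what the construction of $T_{\lambda/\mu}$ gives us: step (3) of Definition \ref{Salmasian'salgorithm} assigns to every box of $P_k$ a letter of absolute value $k$ and to no other box, so $T_{\lambda/\mu}^{(k)} = P_k$ and hence $c(T_{\lambda/\mu})_k = |P_k|$ for every $k$.

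The heart of the argument is the following claim: if $T$ is any tableau with $T^{(i)} = P_i$ for all $i < k$, then $T^{(k)} \subseteq P_k$. Under this hypothesis the set $U_k = D_{\lambda/\mu} \setminus (P_1 \cup \cdots \cup P_{k-1})$ equals $\{(x,y) \in D_{\lambda/\mu} \mid |T(x,y)| \geq k\}$. Let $(x,y)$ be a box with $|T(x,y)| = k$; then $(x,y) \in U_k$. If its up-left neighbour $(x-1,y-1)$ also lay in $U_k$, it would satisfy $|T(x-1,y-1)| \geq k$, contradicting the diagonal growth $|T(x-1,y-1)| < |T(x,y)| = k$. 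Hence $(x-1,y-1) \notin U_k$, which is exactly the condition for $(x,y) \in P_k$. For $k=1$ this reads: a box of absolute value $1$ cannot have an up-left neighbour in $D_{\lambda/\mu}$, so it lies in $P_1$.

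With the claim in hand I would run the induction along the lexicographic comparison. Suppose $c(T)_i = |P_i|$ for all $i < k$. Starting from $T^{(1)} \subseteq P_1$ and $|T^{(1)}| = c(T)_1 = |P_1|$, containment together with equal cardinality forces $T^{(1)} = P_1$; feeding this back into the claim yields $T^{(2)} \subseteq P_2$, and with $c(T)_2 = |P_2|$ we obtain $T^{(2)} = P_2$, and so on, so that $T^{(i)} = P_i$ for all $i < k$. Now the claim gives $T^{(k)} \subseteq P_k$, whence $c(T)_k = |T^{(k)}| \leq |P_k| = c(T_{\lambda/\mu})_k$. Applying this at the \emph{first} index $k$ at which $c(T)$ and $c(T_{\lambda/\mu})$ differ shows $c(T)_k < c(T_{\lambda/\mu})_k$, i.e. $c(T) < c(T_{\lambda/\mu})$; and if no such index exists then $c(T) = c(T_{\lambda/\mu})$, in which case the same chain of equalities yields $T^{(i)} = P_i$ for every $i$, which is the second assertion.

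As for difficulty, there is no serious obstacle here: diagonal growth does all the work. The only point requiring care is the bookkeeping in the induction. One must carry the \emph{set} equality $T^{(i)} = P_i$, not merely the cardinality equality $c(T)_i = |P_i|$, as the inductive hypothesis, and verify at each level that containment plus equal size upgrades cardinality equality to set equality so that the hypothesis propagates. I would also remark that the argument nowhere uses amenability of $T$, so the inequality holds for all tableaux of shape $D_{\lambda/\mu}$; amenability is relevant only insofar as one wants the content-maximiser $T_{\lambda/\mu}$ to actually contribute to the decomposition of $Q_{\lambda/\mu}$.
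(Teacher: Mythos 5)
Your proposal is correct and takes essentially the same approach as the paper: both arguments rest on the diagonal-growth property to show that a box of absolute value $k$ must lie in $P_k$ once the levels $T^{(i)}=P_i$ for $i<k$ are pinned down, and then compare contents greedily in lexicographic order. Your version merely makes the induction (carrying the set equality $T^{(i)}=P_i$ rather than just the cardinalities) more explicit than the paper's rather terse write-up, and your observation that amenability is never actually used is consistent with the paper's argument as well.
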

\begin{proof}
In order to obtain the lexicographically largest content of an amenable tableau of shape $D_{\lambda/\mu}$, we have to insert the maximal number of entries from $\{1', 1\}$ in $D_{\lambda/\mu}$, then the maximal number of entries from $\{2', 2\}$ etc.\par
If $|T(x,y)| = 1$ then $(x-1,y-1) \notin D_{\lambda/\mu}$.
The set of such boxes is $P_1$.
The algorithm of Definition \ref{Salmasian'salgorithm} fills these boxes only with entries from $\{1', 1\}$.
Then the entries from $\{2', 2\}$ must be filled in boxes $(x,y)$ such that $(x-1,y-1) \notin D_{\lambda/\mu} \setminus P_1$.
The set of such boxes is $P_2$ and the algorithm of Definition \ref{Salmasian'salgorithm} fills these boxes only with entries from $\{2', 2\}$.
Repeating this argument for all entries greater than $2$ implies the statement.
\end{proof}

\begin{Prop}\label{numoflexmax}
Let $D_{\lambda/\mu}$ be a diagram.
Let $\nu = c(T_{\lambda/\mu})$.
Then we have
$$f^{\lambda}_{\mu \nu} = \prod_{i = 1}^{\ell(\nu)}{2^{comp(P_i)-1}}.$$
\end{Prop}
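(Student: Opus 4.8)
The plan is to fix the positions of the entries first and then count the admissible markings. By Lemma \ref{lexlargest}, every amenable tableau $T$ with $c(T) = \nu$ satisfies $T^{(i)} = P_i$ for all $i$, so the cells carrying each absolute value are completely determined and only the markings remain free. Recall that each component of $T^{(i)} = P_i$ is a border strip admitting exactly two fillings by letters of $\{i',i\}$, differing solely in the marking of its last box. Ignoring amenability there are therefore $\prod_i 2^{comp(P_i)}$ ways to mark, and the task is to show that amenability cuts this down to exactly $\prod_{i=1}^{\ell(\nu)} 2^{comp(P_i)-1}$.

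For the upper bound I would use that amenability is necessary. Being amenable means being $k$-amenable for every $k>1$, and conditions (c) and (d) of Definition \ref{amenable} (equivalently conditions (5) and (6) of Lemma \ref{checklist}) force $T^{(i)}$ to be fitting for every $i$ with $c(T)_i>0$. Since the last box of the broken border strip $P_i$ is by definition the last box of its leftmost component, fitting forces that last box to be unmarked; as the two fillings of a component are distinguished only by this marking, the filling of the leftmost component of each $P_i$ is pinned down. This leaves at most the $comp(P_i)-1$ non-leftmost components of each $P_i$ free, hence at most $\prod_i 2^{comp(P_i)-1}$ amenable tableaux of content $\nu$.

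The heart of the argument, and the step I expect to be the main obstacle, is the matching lower bound: that \emph{every} assignment of markings to the non-leftmost components (with each leftmost component fixed as above) is in fact amenable. This is the phenomenon already exploited in the proof of Proposition \ref{lambda/n}, where the factor $2^{comp-1}$ arises from the free last boxes of all but one component; the new difficulty here is that several values $i$ occur simultaneously, so flipping the last box of a component of $P_i$ interacts with $k$-amenability for both $k=i$ and $k=i+1$. I would verify $k$-amenability one $k$ at a time through Lemma \ref{checklist} (or by exhibiting the conditions of Corollary \ref{checklistco}), checking that each condition survives \emph{all} markings of the non-leftmost components of $P_{k-1}$ and $P_k$. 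The delicate conditions are those coupling two consecutive values: condition (1) of Lemma \ref{checklist}, namely $c(T)^{(u)}_{k-1} > c(T)^{(u)}_k$, must hold even in the worst case where every non-leftmost last box of $P_{k-1}$ is marked while none of $P_k$ is; and the inequalities (2)--(4) involving $\mathcal{S}^{\boxtimes}_T$ and $\mathcal{B}_T^{(k)}$ must be re-examined against the shifted markings.

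The structural input that makes this work is the layered description of the $P_i$ coming from Definition \ref{Salmasian'salgorithm}: for $(x,y) \in P_k$ with $(x-1,y-1) \in D_{\lambda/\mu}$ one has $(x-1,y-1) \in P_{k-1}$, so the diagonal shift $(x,y) \mapsto (x+1,y+1)$ embeds the cells of $P_{k-1}$ having a south-east neighbour into $P_k$. Translated into a count of unmarked entries (the number of unmarked $i$'s contributed by a component in the Salmasian filling equals the number of columns it occupies, and marking its last box lowers this by one), this embedding yields condition (1) of Lemma \ref{checklist} for all markings: even in the extremal case above the number of unmarked $(k-1)$'s still exceeds the number of unmarked $k$'s. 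The $\mathcal{S}^{\boxtimes}_T$ and $\mathcal{B}_T^{(k)}$ conditions are then local, and I expect them to follow from the border-strip shape of each component together with this diagonal relation. Once all $\prod_i 2^{comp(P_i)-1}$ fillings are shown amenable, the upper bound identifies them as precisely the amenable tableaux of content $\nu$, and $f^\lambda_{\mu\nu} = \prod_{i=1}^{\ell(\nu)} 2^{comp(P_i)-1}$ follows.
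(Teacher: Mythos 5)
Your skeleton coincides with the paper's: pin down $T^{(i)} = P_i$ via Lemma \ref{lexlargest}, observe that only the markings of last boxes of components remain free, use Definition \ref{amenable} (c), (d) to force the leftmost component of each $P_i$ to be fitting (upper bound), and then show every choice of markings on the remaining $comp(P_i)-1$ components is amenable (lower bound). The problem is that the lower bound, which you yourself single out as ``the heart of the argument'' and ``the main obstacle,'' is never actually carried out: you announce that you \emph{would} verify conditions (1)--(4) of Lemma \ref{checklist} and that you \emph{expect} the $\mathcal{S}^{\boxtimes}_T$ and $\mathcal{B}_T^{(k)}$ conditions to follow, but no verification is given. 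As written, the proposal establishes only the inequality $f^{\lambda}_{\mu\nu} \leq \prod_i 2^{comp(P_i)-1}$.

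The missing step is small and you already have the ingredient needed to close it, namely the diagonal relation between consecutive layers. Let $(x,y)$ be the last box of a non-leftmost component of $P_i$ with $i \geq 2$ and suppose it is marked, $T(x,y) = i'$. Because the component is not the leftmost one, both $(x,y-1)$ and $(x-1,y-1)$ lie in $D_{\lambda/\mu}$ and in fact in $P_{i-1}$; moreover $(x-1,y-1)$ has the box $(x,y-1)$ of $P_{i-1}$ directly beneath it, so it is \emph{not} the last box of its component of $P_{i-1}$ and therefore carries $(i-1)'$ in \emph{every} tableau under consideration, independently of the marking choices. This verifies condition (3) of Corollary \ref{checklistco} for each newly marked box (for $i=1$ nothing is required there), and the remaining conditions of that corollary are inherited from $T_{\lambda/\mu}$ since the positions of all entries are unchanged. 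In particular, the route you sketch through the full Lemma \ref{checklist} --- worrying about $c(T)^{(u)}_{k-1} > c(T)^{(u)}_k$ in the worst-case marking and re-examining the $\mathcal{S}^{\boxtimes}_T$ inequalities --- is unnecessarily heavy: the sufficient criterion of Corollary \ref{checklistco} disposes of all markings at once, which is exactly how the paper concludes.
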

\begin{proof}
Let $T$ be an amenable tableau of $D_{\lambda/\mu}$ with content $\nu$.
By Lemma \ref{lexlargest}, we have $T^{(i)} = P_i$.
Thus, such a tableau $T$ can differ from $T_{\lambda/\mu}$ only by markings of some entries.
For each $i$ each component $C_2, \ldots, C_{comp(P_i)}$ of $P_i$ can be filled in two different ways that differ by the marking of the last box.
By Definition \ref{amenable} (c) and (d), the component $C_1$ must be fitting.\par
Let $(x,y)$ be the last box of one of the components $C_2, \ldots, C_{comp(P_i)}$ and let $T(x,y) = i'$.
Then, by Corollary \ref{checklistco}, $T$ is amenable because in this case we have $(x-1,y-1), (x,y-1) \in P_{i-1}$ and, hence, then $T(x-1,y-1) = (i-1)'$.
Thus, for each component of $P_i$ except for the first one, there are two possibilities how to fill the last box and the statement follows.
\end{proof}

Before we can start to classify the $Q$-homogeneous skew Schur $Q$-functions we need to introduce one further operation on diagrams different from the well known transposition and rotation.
As transposition and rotation of a diagram with special properties, this operation keeps the corresponding Schur $Q$-function unaltered (see \cite{BarekatVanWilligenburg} for a proof of this fact for transposition and rotation).

\begin{Def}
Let $D$ be a diagram.
The \textbf{orthogonal transpose} of $D$ is obtained as follows: reflect the boxes of $D$ along the diagonal $\{(z,-z) \mid z \in \NN\}$.
Move this arrangement of boxes such that the top row with boxes is in the first row and the lowermost box of the leftmost column with boxes is part of the diagonal $\{(z,z) \mid z \in \NN\}$.
We denote the orthogonal transpose of a diagram by $D^{ot}$.
\end{Def}
\begin{Ex}
For
$${\Yvcentermath1 D = \young(::\none \none \none ,\none \none \none \none \none ,\none \none \none \none ,:\none \none \none ,::\none )}$$
we obtain
$${\Yvcentermath1 D^{ot} = \young(:::\none \none ,:\none \none \none \none ,\none \none \none \none \none ,:\none \none \none ,::\none \none )}.$$
\end{Ex}

\begin{Lem}\label{ot}
Let $D$ be a diagram of shape $D_{\lambda/\mu}$.
There is a content-preserving bijection between the tableaux of shape $D$ and the tableaux of shape $D^{ot}$.
In particular, we have $Q_{D} = Q_{D^{ot}}$.
\end{Lem}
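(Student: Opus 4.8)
The plan is to exhibit the orthogonal transpose as a reflection and to upgrade it to a bijection on fillings by simultaneously reversing the alphabet. Write $r\colon D\to D^{ot}$ for the bijection of cells effected by the orthogonal transpose. Since the repositioning in the definition is a rigid translation, and since the content of a tableau does not depend on where the diagram sits (so that $Q$ is unchanged, exactly as in the reduction $Q_D=Q_{\bar D}$ before the lemma), I may assume $r$ is the antidiagonal reflection $(x,y)\mapsto(-y,-x)$. The two features of $r$ that I would use are: it reverses the cell order of Definition \ref{tableaudef}, i.e.\ $(x,y)\le(x',y')$ forces $r(x,y)\ge r(x',y')$; and it interchanges rows with columns, sending two cells in a common row of $D$ to two cells in a common column of $D^{ot}$ and vice versa.

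A naive transport $T\mapsto T\circ r^{-1}$ cannot work, because $r$ reverses the order along each northwest--southeast diagonal, turning the strictly increasing absolute values recorded after Definition \ref{fitting} into strictly decreasing ones; no choice of markings can repair monotonicity. I would therefore compose with the order-reversing involution $g$ of the truncated alphabet $\{1'<1<\dots<M'<M\}$, where $M$ is the largest absolute value occurring in $T$, given by $g(k)=(M{+}1{-}k)'$ and $g(k')=(M{+}1{-}k)$, and set $\Psi(T)(r(x,y)):=g(T(x,y))$. Now $r$ and $g$ are both order-reversing, so their composite preserves weak monotonicity and $\Psi(T)$ satisfies condition (a). Moreover $g$ swaps primed and unprimed entries level by level while $r$ swaps rows and columns, so the column condition (b) for $\Psi(T)$ translates into the row condition (c) for $T$, and the row condition (c) for $\Psi(T)$ into the column condition (b) for $T$; hence $\Psi(T)$ is a genuine tableau of $D^{ot}$. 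Because $r$ and $g$ are involutions, the same recipe applied to $D^{ot}$ inverts $\Psi$, so $\Psi$ is a bijection. As a consistency check, $|\Psi(T)|=M{+}1-|T|\circ r^{-1}$ shows that $\Psi$ carries the level set $T^{(i)}$ to $\Psi(T)^{(M+1-i)}=r(T^{(i)})$; since a border strip is characterized by having no two diagonally adjacent cells, a condition symmetric under $r$, each level set remains a broken border strip.

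Reading off contents, $\Psi$ sends a tableau of content $(c_1,\dots,c_M)$ to one of content $(c_M,\dots,c_1)$. Consequently, specializing to the variables $x_1,\dots,x_N$ for any $N$, the bijection (taken with $M=N$) gives $Q_{D^{ot}}(x_1,\dots,x_N)=Q_D(x_N,\dots,x_1)$; as $Q_D$ is a symmetric function this equals $Q_D(x_1,\dots,x_N)$, and letting $N\to\infty$ yields $Q_D=Q_{D^{ot}}$. Finally, in a symmetric function the coefficient of a monomial depends only on the underlying partition of its exponent, so the identity $Q_D=Q_{D^{ot}}$ says precisely that for every content $\nu$ the tableaux of $D$ and of $D^{ot}$ with content $\nu$ are equinumerous; choosing a bijection between these finite sets for each $\nu$ produces the asserted content-preserving bijection.

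The step I expect to be the crux is the verification in the second paragraph that the marked/unmarked data transforms correctly: one must check that combining the geometric reflection $r$ with the prime-swapping map $g$ really does interchange the column condition (b) and the row condition (c) of Definition \ref{tableaudef}, and that this is compatible on each border-strip component with the single free marking of its last box (in the sense of Definition \ref{fitting}). A secondary point requiring care is to confirm that $D^{ot}$ is again a shifted skew diagram after the reduction to basic form, so that $Q_{D^{ot}}$ is defined, and that the repositioning translation is harmless for the content bookkeeping.
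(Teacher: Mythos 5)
Your proof is correct and follows essentially the same route as the paper: both construct a content-\emph{reversing} bijection by reflecting the filled diagram across the antidiagonal and reversing the alphabet, and then invoke the symmetry of the skew Schur $Q$-function to upgrade it to a content-preserving bijection (the paper composes its map $\Lambda$ with a bijection $\Theta$ supplied by symmetry; you argue via equinumerosity of the coefficient sets). Your uniform order-reversing relabeling $g$ on the truncated alphabet, with $M=N$ fixed as in your third paragraph, is simply a cleaner packaging of the paper's case-by-case reassignment of markings, which differs only in the convention chosen at the free last box of each border-strip component.
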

\begin{proof}
Let $T$ be a tableau of shape $D_{\lambda/\mu}$.
Let $\nu := c(T)$ and let $n := \ell(\nu)$.
Let $\Lambda$ be the map that maps $T$ to $\Lambda(T)$ where $\Lambda(T)$ is obtained as follows:
\begin{itemize}
	\item Reflect and move the boxes of $T$ together with their entries along the diagonal $\{(z,-z) \mid z \in \NN\}$. Denote the resulting filling of $D_{\lambda/\mu}^{ot}$ by $\bar{T}$.
	\item For all $i$ do the following:
	\begin{itemize}
		\item If $\bar{T}(x,y) \in \{i',i\}$ and $\bar{T}(x+1,y) \in \{i',i\}$ then set $\Lambda(T)(x,y) = (n-i+1)'$.
		\item If $\bar{T}(x,y) \in \{i',i\}$ and $\bar{T}(x,y-1) \in \{i',i\}$ then set $\Lambda(T)(x,y) = n-i+1$.
		\item If $\bar{T}(x,y) \in \{i',i\}$ and neither $\bar{T}(x+1,y) \in \{i',i\}$ nor $\bar{T}(x,y-1) \in \{i',i\}$ then if $(x,y)$ is the $k^{\textrm{th}}$ such box counted from the left let $(u,v)$ be the last box of the $k^{\textrm{th}}$ component of $T^{(i)}$. If $T(u,v) = i'$ set $\Lambda(T)(x,y) = (n-i+1)'$ and if $T(u,v) = i$ set $\Lambda(T)(x,y) = n-i+1$.
	\end{itemize}
\end{itemize}
One can see that $\Lambda$ maps tableaux of shape $D$ to tableaux of shape $D^{ot}$.
After orthogonal transposition, the rows and columns are weakly increasing since we orthogonally transpose the rows and columns and change the entries in reverse order.
Clearly, in $\Lambda(T)$ there is at most one $i$ in each column and at most one $i'$ in each row.
Hence, the properties of Definition \ref{tableaudef} are satisfied.\par
Let $a$ be the unmarked version of the least entry from $T$ and $b$ be the unmarked version of the greatest entry from $T$.
Then
$$c(\Lambda(T)) = \bar{\nu} = (\nu_1, \nu_2 \ldots, \nu_{a-1}, \nu_b, \nu_{b-1}, \nu_{b-2}, \ldots, \nu_{a+1}, \nu_a)$$
where $\nu_1 = \nu_2 = \ldots = \nu_{a-1} = 0$.\par
The map $\Lambda$ is an involution in the set of tableaux and, hence, a bijection.\par
Since $Q_{\lambda/\mu}$ is a symmetric function, there are as many tableaux of shape $D_{\lambda/\mu}$ with content $\nu$ as there are with content $\bar{\nu}$.
Thus, there is a bijection taking tableaux of $D_{\lambda/\mu}$ with content $\nu$ to tableaux of $D_{\lambda/\mu}$ with content $\bar{\nu}$.
Let $\Theta$ be such a bijection.
Then $\Omega := \Theta \circ \Lambda$ is a content-preserving bijection since $\Omega$ is a composition of bijections and each of these two bijections flips the content.
\end{proof}
\begin{Rem}
After proving the previous lemma the author discovered that DeWitt proved this result in \cite[section 4.2]{DeWitt} in a slightly different way (with a minor mistake in the content of the image of a tableau).
In her thesis she called this operation ``flip'' and stated that this operation is well known but unfortunately did not give a reference.
\end{Rem}
For ``unshifted'' diagrams, that is, diagrams $D_{\lambda/\mu}$ where $\ell(\mu) = \ell(\lambda)-1$, orthogonal transposition is just the concatenation of transposition and rotation.
But unlike transposition and rotation this operation also works for diagrams that are not unshifted.\par
It is possible to give bijections like in Lemma \ref{ot} for transposition and rotation of diagrams.
An example of a bijection for the rotation is to rotate the diagram together with the entries through 180 degrees and then change the entries in the same way as $\Lambda$ does.

\section{Classification of $Q$-homogeneous skew Schur $Q$-functions}

\begin{Def}
A symmetric function $f$ is called \textbf{$Q$-homogeneous} if it is some multiple of a single Schur $Q$-function, that is, if $f = k \cdot Q_{\nu}$ for some $\nu \in DP$ and some $k \in \NN$.
A diagram $D$ is called \textbf{$Q$-homogeneous} if the skew Schur $Q$-function $Q_D$ is $Q$-homogeneous.
\end{Def}

We are interested in answering the question which $Q_{\lambda/\mu}$ are $Q$-homogeneous, that is, for which $\lambda, \mu \in DP$ we have $Q_{\lambda/\mu} = k \cdot Q_{\nu}$.
Clearly, then we must have $\nu = c(T_{\lambda/\mu})$.
Proposition \ref{numoflexmax} gives a restriction on the number $k$ in $Q_{\lambda/\mu} = k \cdot Q_{\nu}$, namely it has to be some power of $2$.\par
In the following, we set again $P_i=P_i(\lambda/\mu)$ for $i\ge 1$; hence $P_i = T_{\lambda/\mu}^{(i)}$.
Note that we will always assume that $D_{\lambda/\mu}$ is basic.

\subsection{The disconnected case}

In the following we will find a classification of the $Q$-homogeneous skew Schur $Q$-functions indexed by a disconnected diagram.
We will first exclude all non-$Q$-homogeneous skew Schur $Q$-function indexed by a disconnected diagram, and then in Proposition \ref{notcon} we will prove the $Q$-homogeneity of the skew Schur $Q$-functions indexed by one of the remaining disconnected diagrams.

\begin{Lem}\label{hom1}
Let $comp(D_{\lambda/\mu}) > 1$ and $\nu = c(T_{\lambda/\mu})$.
If there is a component $C_i$ such that $i > 1$ and $C_i$ has at least two boxes then $f^{\lambda}_{\mu \bar{\nu}} > 0$ where $\bar{\nu} = (\nu_1-1, \nu_2+1,\linebreak
\nu_3, \nu_4, \ldots)$.
In particular, $Q_{\lambda/\mu}$ is not $Q$-homogeneous.
\end{Lem}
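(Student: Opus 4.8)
The plan is to produce a single amenable tableau of shape $D_{\lambda/\mu}$ with content $\bar\nu$; this already gives $f^{\lambda}_{\mu\bar\nu}>0$. Since $\nu=c(T_{\lambda/\mu})$ is the lexicographically largest content and is attained (Lemma \ref{lexlargest}), we have $f^{\lambda}_{\mu\nu}>0$, while $\bar\nu<\nu$ in the lexicographic order and $\bar\nu\neq\nu$. Hence the expansion $Q_{\lambda/\mu}=\sum_{\rho}f^{\lambda}_{\mu\rho}Q_{\rho}$ has at least the two distinct constituents $Q_\nu$ and $Q_{\bar\nu}$ with positive coefficient, so $Q_{\lambda/\mu}$ cannot be a multiple of a single $Q_\rho$, i.e.\ it is not $Q$-homogeneous. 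Note that $\bar\nu\in DP$ is automatic, being the content of an amenable tableau.

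To build the tableau I would start from $T_{\lambda/\mu}$ and alter it only inside the component $C_i$, leaving every other component exactly as in $T_{\lambda/\mu}$. This guarantees that the layers $T^{(j)}=P_j$ stay intact for all $j\ge 3$, so the content can change only in its first two coordinates. Inside $C_i$ I move one unit of weight from the $1$-entries to the $2$-entries. Using that $C_i$ has at least two boxes, one locates a box $\beta=(x,y)\in P_1\cap C_i$ at the bottom of a vertical run of $P_1$ and replaces its entry by $2'$ while changing the box directly above it from $1'$ to $1$; this turns one total-$1$ into a total-$2$ and preserves \emph{both} unmarked counts $c^{(u)}_1$ and $c^{(u)}_2$, so the inequality $c^{(u)}_1>c^{(u)}_2$ inherited from $T_{\lambda/\mu}$ via Lemma \ref{unmarked} survives. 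When $P_1\cap C_i$ carries no vertical run of length $\ge 2$ (for instance a horizontal domino), I would first pass to the orthogonal transpose via Lemma \ref{ot} to reduce to this situation, or instead promote a suitable box to an \emph{unmarked} $2$ and verify the count directly. In all cases the resulting filling has content exactly $\bar\nu$.

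It then remains to verify that the new filling $T$ is amenable, which by Lemma \ref{checklist} reduces to the conditions for $k=2$ and $k=3$: all other $k$ are unaffected, and for $k=3$ the modification only increases the number of $2$-entries, which can only help. For $k=2$ the delicate points are the neighbourhood inequalities (2),(3) of Lemma \ref{checklist} at $\beta$ and the fitting conditions (5),(6). Here the hypothesis $i>1$ is essential: because $C_i$ is not the leftmost component, the last box of $T^{(1)}$ — and, when it exists, the last box of $T^{(2)}$ lying in an earlier component — falls outside $C_i$ and is inherited unchanged from $T_{\lambda/\mu}$, so the fitting conditions hold automatically; moreover the $1$-entries above and to the right of $\beta$, present precisely because $C_i$ has a second box and because earlier components contribute, furnish the strict inequality $|\mathcal{S}^{\boxtimes}_T(x,y)^{(1)}|>|\mathcal{S}^{\boxtimes}_T(x,y)^{(2)}|$ required by Lemma \ref{checklist}(3). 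I expect the main obstacle to be exactly this simultaneous bookkeeping for $k=2$: arranging the single marking correction so that the new $2$ may be made unmarked precisely when it is forced to be the last box of $T^{(2)}$ (so that (6) holds) while still keeping $c^{(u)}_1>c^{(u)}_2$ for (1). That this genuinely fails when $C_i$ is the first component — for example a single vertical domino, where $Q_{\lambda/\mu}$ really is $Q$-homogeneous — confirms that the use of $i>1$ is not a mere convenience but is where the argument carries its content.
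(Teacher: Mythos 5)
Your global strategy is exactly the paper's: exhibit one amenable tableau of content $\bar\nu$ in addition to $T_{\lambda/\mu}$, and conclude non-homogeneity from two distinct constituents. The gap lies in the construction, and it is precisely the point you label ``the main obstacle'' and then leave open. Your lead construction writes a \emph{marked} $2'$ at the bottom of a vertical run of $P_1$ and unmarks the $1'$ above it. This fails in two ways. First, if the chosen run is not in the lowermost row of $C_i$, that row can already contain a $2'$ of $T_{\lambda/\mu}$ (this happens whenever $P_2$ has a vertical run meeting that row, e.g.\ for a component whose rows occupy columns $10$--$15$, $8$--$14$, $7$--$11$, where the run of $P_1$ in column $10$ has its bottom in the second row while $T_{\lambda/\mu}$ already puts a $2'$ in that row at column $11$), so the filling violates Definition \ref{tableaudef}\,(c). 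Second, even in the lowermost row, if no earlier component meets $P_2$ (say $C_1$ is a single box), the new $2'$ becomes the last box of the leftmost component of $T^{(2)}$, so condition (6) of Lemma \ref{checklist} fails; and your fallback of an unmarked $2$ raises $c^{(u)}_2$ by one while leaving $c^{(u)}_1$ unchanged, so condition (1) is genuinely in danger. You state this dilemma but do not resolve it, so the proof is incomplete. (Your remark that ``for $k=3$ the modification can only help'' overlooks that condition (5) for $k=3$ is again the fitting of $T^{(2)}$, i.e.\ the same issue.)

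The paper's resolution is worth internalizing. After an orthogonal transpose (Lemma \ref{ot}) one may assume $C_i$ has boxes in at least two rows; one then takes $(x,y)$ to be the rightmost box of $P_1$ in the \emph{lowermost} row of $C_i$, so that $(x-1,y)\in P_1$ and $(x+1,y+1)\notin D_{\lambda/\mu}$, and sets $T(x,y)=2$ \emph{unmarked}, $T(x-1,y)=1$. These choices dissolve your dilemma simultaneously: the new $2$ has a fresh unmarked $1$ directly above it in its column (so Corollary \ref{checklistco}\,(2) holds at the new box), the bottom box of the leftmost column of $C_1$ is still an unmarked $1$ with no $2$ below it (supplying Corollary \ref{checklistco}\,(1), and hence $c^{(u)}_1>c^{(u)}_2$, because that spare $1$ lives in $C_1\neq C_i$ and is untouched --- this is where $i>1$ really enters), the last boxes of $T^{(1)}$ and $T^{(2)}$ are unchanged, and $(x+1,y+1)\notin D_{\lambda/\mu}$ rules out both a clash of $2$'s near $(x,y)$ and a $2'$ at $(x,y+1)$ whose required witness $1'$ at $(x-1,y)$ you would have destroyed. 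Thus Corollary \ref{checklistco} applies verbatim, with no case analysis on markings. Finally, your closing aside is off: in the disconnected setting a first component whose leftmost column is a vertical domino is never $Q$-homogeneous (Lemma \ref{hom4}); the correct reason the hypothesis $i>1$ is indispensable is the one above, namely that the spare unmarked $1$ and the last boxes of $T^{(1)}$ and $T^{(2)}$ must survive in $C_1$.
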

\begin{proof}
We may consider the case that a component which is not the first component has boxes in two rows.
Otherwise we may consider the orthogonal transpose of the diagram.\par
Let $C_i$ where $i > 1$ be a component that has boxes in at least two rows.
If $(x,y)$ is the rightmost box of the lowermost row of $C_i \cap P_1$ then $(x-1,y) \in P_1$ and $(x+1,y+1) \notin D_{\lambda/\mu}$.
We obtain a new tableau $T$ if we set $T(x,y) = 2$, $T(x-1,y) = 1$ and $T(r,s) = T_{\lambda/\mu}(r,s)$ for every other box $(r,s) \in D_{\lambda/\mu}$.
By Corollary \ref{checklistco}, $T$ is amenable and has content $c(T) = (\nu_1-1, \nu_2+1, \nu_3, \nu_4, \ldots)$.
\end{proof}
\begin{Ex}
For ${\Yvcentermath1 T_{\lambda/\mu} = \young(::::\meins 1,:::112,\meins 11,1\mzwei 2,:2)}$ we obtain ${\Yvcentermath1 T = \young(::::11,:::122,\meins 11,1\mzwei 2,:2)}$.
\end{Ex}

\begin{Lem}\label{hom2}
Let $comp(D_{\lambda/\mu}) > 2$ and $\nu = c(T_{\lambda/\mu})$.
Then we have $f^{\lambda}_{\mu \bar{\nu}} > 0$ where $\bar{\nu} = (\nu_1-1, \nu_2+1, \nu_3, \nu_4, \ldots)$.
In particular, $Q_{\lambda/\mu}$ is not $Q$-homogeneous.
\end{Lem}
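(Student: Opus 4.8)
The plan is to exhibit a single amenable tableau $T$ of shape $D_{\lambda/\mu}$ whose content is $\bar\nu=(\nu_1-1,\nu_2+1,\nu_3,\dots)$; since $\bar\nu\neq\nu$, the decomposition of $Q_{\lambda/\mu}$ then contains the two distinct summands $Q_\nu$ and $Q_{\bar\nu}$, so $Q_{\lambda/\mu}$ is not $Q$-homogeneous. First I would reduce: if some component $C_i$ with $i>1$ has at least two boxes, then Lemma \ref{hom1} already gives $f^\lambda_{\mu\bar\nu}>0$ and we are finished. Hence I may assume that $C_2,\dots,C_m$, where $m=comp(D_{\lambda/\mu})>2$, are all single boxes, so there are at least two of them.

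Before constructing $T$ I would pin down the geometry forced by distinctness of $\lambda$ and $\mu$. Writing $[L_i,R_i]$ for the columns occupied by row $i$ (so $L_i=i+\mu_i$ and $R_i=i+\lambda_i-1$), a disconnection between rows $i$ and $i+1$ can only be of the form $R_{i+1}<L_i$: the alternative $R_i<L_{i+1}$ forces $\lambda_i\le\mu_{i+1}+1$, which is impossible since $\lambda_i>\mu_i\ge\mu_{i+1}+1$ whenever $i\le\ell(\mu)$, while for $i>\ell(\mu)$ it would make row $i$ a bottom row with no successor. Consequently each lower row-block sits strictly to the left of the one above it, so in the left-to-right numbering $C_1$ is the bottom block while $C_2,\dots,C_m$ climb up and to the right. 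In particular every single box $C_j$ $(j\ge2)$ lies up-and-right of every box of $C_1$, and $C_{j+1},\dots,C_m$ lie up-and-right of $C_j$. Finally, an isolated box has no up-left diagonal neighbour (if $(x,y)$ and $(x-1,y-1)$ both lay in $D_{\lambda/\mu}$, distinctness would force $(x-1,y)\in D_{\lambda/\mu}$, destroying the isolation), so each $C_j$ $(j\ge2)$ lies in $P_1$ and is filled with an unmarked $1$ in $T_{\lambda/\mu}$.

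The construction is to take $T$ equal to $T_{\lambda/\mu}$ with the single entry of $C_2$ changed from $1$ to $2$; this is a legal tableau and has content exactly $\bar\nu$, and since I have only added one unmarked $2$, amenability for $k\ge3$ is untouched, so only $2$-amenability must be checked via Lemma \ref{checklist}. The bookkeeping is driven by the fact that the $m-1\ge2$ single boxes contribute $m-1$ unmarked $1$'s and no $2$'s, so the global margin $c^{(u)}_1-c^{(u)}_2$ of $T_{\lambda/\mu}$ is at least $m$ (adding the contribution $\ge1$ of $T_{\lambda/\mu}|_{C_1}$, via Lemma \ref{unmarked}); thus after the change $c^{(u)}_1-c^{(u)}_2\ge m-2\ge1$ and condition (1) survives. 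Each $2$-box $(a,b)$ of $C_1$ has all $m-1$ single boxes in its up-right region, so its local margin $|\mathcal{S}^{\boxtimes}_T(a,b)^{(1)}|-|\mathcal{S}^{\boxtimes}_T(a,b)^{(2)}|$ is at least $m-1\ge2$ and drops by at most $2$, keeping conditions (2) and (3) intact; and the new $2$ in $C_2$ satisfies (2) because $C_3,\dots,C_m$ supply at least $m-2\ge1$ ones up-and-right of it. Conditions (5) and (6) hold because the leftmost components of $T^{(1)}$ and $T^{(2)}$ lie in $C_1$ and were already fitting in $T_{\lambda/\mu}$.

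The step I expect to be the main obstacle is condition (4): the modification raises $d=b_T^{(2)}+c^{(u)}_2-c^{(u)}_1+1$ by $2$, so $d$ may turn positive although it was nonpositive for $T_{\lambda/\mu}$. Here I would exploit that $\mathcal{B}_T^{(2)}$ and $\widehat{\mathcal{B}_T^{(1)}}$ are unaffected by the change (both are determined by the marked entries of $C_1$ and their diagonal neighbours), so the required injective map $\phi$ can be taken from the one guaranteed by the amenability of $T_{\lambda/\mu}$ and extended by the at most two extra preimages; verifying that $\widehat{\mathcal{B}_T^{(1)}}$ has room for them, using the $P_1$/$P_2$ structure of the bottom block $C_1$, is the genuine technical content. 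Once $T$ is shown amenable, $\bar\nu\in DP$ is automatic and distinct from $\nu$, yielding $f^\lambda_{\mu\bar\nu}>0$ and the non-$Q$-homogeneity of $Q_{\lambda/\mu}$.
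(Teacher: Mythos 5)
Your construction is the one the paper uses: after your (valid, though not strictly necessary) reduction via Lemma \ref{hom1} to the case where $C_2,\dots,C_m$ are single boxes, changing the entry of $C_2$ from $1$ to $2$ is exactly the paper's modification of the rightmost box of the lowermost row of $C_2\cap P_1$, and the paper likewise concludes by appealing to Lemma \ref{checklist} (noting that Corollary \ref{checklistco} fails only because the new $2$ has no $1$ above it in its column, which is compensated by the $1$ in $C_3\cap P_1$). So the approach is right; the problem is that your verification of $2$-amenability has two genuine holes. First, for condition (3) your bound is one short: for a box of $\mathcal{B}_T^{(2)}$ you need the local margin to remain \emph{strictly} positive after the drop by $2$, so you need it to be at least $3$ beforehand, whereas your estimate "$\geq m-1$" gives only $2$ when $m=3$. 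The fix is that for boxes of $\mathcal{B}^{(2)}$ the margin computed inside $C_1$ alone is already $\geq 1$ (this is condition (3) for the amenable tableau $T_{\lambda/\mu}$), so the true bound is $\geq m$, not $m-1$.

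Second, and more seriously, you explicitly leave condition (4) open, and the route you sketch for it does not work: if $d$ was nonpositive for $T_{\lambda/\mu}$ there is no map $\phi$ to "extend", and trying to show $\widehat{\mathcal{B}_T^{(1)}}$ has two spare elements is not the right comparison. The clean argument is to compare with the component $C_1$ on its own: Salmasian's algorithm is local, so $T_{\lambda/\mu}$ restricted to $C_1$ is the (amenable) tableau $T_{C_1}$, and writing $d_{C_1}:=b^{(2)}+c^{(u)}_2(C_1)-c^{(u)}_1(C_1)+1$ for its condition-(4) parameter, the modified tableau has $d = d_{C_1}-(m-3)\leq d_{C_1}$ precisely because $m\geq 3$ (this is where the hypothesis $comp(D_{\lambda/\mu})>2$ does its work). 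Since $\mathcal{B}_T^{(2)}$, $\widehat{\mathcal{B}_T^{(1)}}$ and all rows lying between two rows of $C_1$ are contained in $C_1$ and are unchanged by the modification, the injection required for $T$ is just the restriction to $\mathcal{B}_T^{(2)}(d)$ of the injection guaranteed by the amenability of $T_{C_1}$. With these two repairs your proof closes; as written, the key step you yourself flag as "the genuine technical content" is missing and your proposed way of supplying it would not succeed.
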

\begin{proof}
Let $(x,y)$ be the rightmost box of the lowermost row of $C_2 \cap P_1$.
We obtain a new tableau $T$ if we set $T(x,y) = 2$ and $T(r,s) = T_{\lambda/\mu}(r,s)$ for every other box $(r,s) \in D_{\lambda/\mu}$.
By Corollary \ref{checklistco}, $T$ is $m$-amenable for $m > 2$.
There is a $2$ but no $1$ in the $y^{\textrm{th}}$ column.
However, there is a $1$ in the last box of $C_3 \cap P_1$.
Hence, by Lemma \ref{checklist}, amenability follows.
It is clear that $c(T) = (\nu_1-1, \nu_2+1, \nu_3, \nu_4, \ldots)$.
\end{proof}
\begin{Ex}
For ${\Yvcentermath1 T_{\lambda/\mu} = \young(:::::1,:::\meins 1,:::12,\meins 11,1\mzwei 2,:2)}$ we obtain ${\Yvcentermath1 T = \young(:::::1,:::\meins 1,:::22,\meins 11,1\mzwei 2,:2)}$.
\end{Ex}

\begin{Lem}\label{hom4}
Let $comp(D_{\lambda/\mu}) > 1$ and $\nu = c(T_{\lambda/\mu})$.
Suppose the leftmost column of $C_1$ (which is the leftmost column of $D_{\lambda/\mu}$) contains at least two boxes.
Then $f^{\lambda}_{\mu \bar{\nu}} > 0$ where $\bar{\nu} = (\nu_1-1, \nu_2, \nu_3, \ldots, \nu_z, \nu_{z+1}+1, \nu_{z+2}, \ldots)$ where $z := \ell(\lambda)-\ell(\mu)$.
In particular, $Q_{\lambda/\mu}$ is not $Q$-homogeneous.
\end{Lem}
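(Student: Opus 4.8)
The plan is to follow the pattern of Lemmas~\ref{hom1} and~\ref{hom2}: I will modify $T_{\lambda/\mu}$ into a single amenable tableau $T$ of shape $D_{\lambda/\mu}$ with $c(T)=\bar\nu$, so that $f^{\lambda}_{\mu\bar\nu}\ge 1$. Since $f^{\lambda}_{\mu\nu}\ge 1$ by Proposition~\ref{numoflexmax} and $\bar\nu\neq\nu$ (they differ in the first part), $Q_{\lambda/\mu}$ will then have two distinct Schur $Q$-function constituents and so cannot be $Q$-homogeneous.

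First I would record what the hypothesis buys. The leftmost column of $C_1$ is column $\ell(\mu)+1$, and it contains at least two boxes precisely when $\mu_{\ell(\mu)}=1$; equivalently, the box $a_1:=(\ell(\mu),\ell(\mu)+1)$ belongs to $D_{\lambda/\mu}$. Set $b_j:=(\ell(\mu)+j,\ell(\mu)+j)$ and $a_j:=(\ell(\mu)+j-1,\ell(\mu)+j)$ for $1\le j\le z$; the $b_j$ are the boxes of the main diagonal of $C_1$, and $a_j$ sits directly above $b_j$. Computing the maximal up-left diagonal chains through these boxes shows that in $T_{\lambda/\mu}$ each $b_j$ carries the unmarked entry $j$ while each $a_j$ carries the marked entry $j'$. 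I then define $T$ by $T(b_j)=j+1$ and $T(a_j)=j$, both unmarked, for $1\le j\le z$, and $T=T_{\lambda/\mu}$ everywhere else.

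That $T$ is a tableau with $c(T)=\bar\nu$ is a local verification. Since $a_j$ lies directly above $b_j$ in column $\ell(\mu)+j$ and $b_{j-1}$ lies directly left of $a_j$ in row $\ell(\mu)+j-1$, weak monotonicity and the column/row marking rules of Definition~\ref{tableaudef} are preserved. Only the diagonal entries change absolute value (each $b_j$ from $j$ to $j+1$), so the content loses one $1$, keeps the parts $2,\dots,z$, and gains one $z+1$; that is exactly $\bar\nu$.

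The crux is amenability, which I would check via Corollary~\ref{checklistco} (falling back on Lemma~\ref{checklist} if needed) for every $k$. The fitting conditions are clean: since the modified boxes other than $b_{i-1}$ carrying value $i$ all lie in columns $\ge\ell(\mu)+i$, the leftmost value-$i$ box of $T$ is $b_{i-1}$ for $2\le i\le z+1$ (and $a_1$ for $i=1$, where the hypothesis $\mu_{\ell(\mu)}=1$ is exactly what makes $a_1$ available); this box is the last box of the leftmost component of $T^{(i)}$ and is unmarked, so each $T^{(i)}$ is fitting. Unmarking the $a_j$ does not break the ``marked entry has a marked up-left neighbour'' condition, because the only box whose up-left neighbour is an $a_j$ is $a_{j+1}$, which is itself now unmarked. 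The genuine obstacle is the count condition $c(T)^{(u)}_{1}>c(T)^{(u)}_{2}$: the reassignment changes the number of unmarked entries by $+1$ in each value $2,\dots,z+1$ and by $0$ in value $1$, so this reduces to $c(T_{\lambda/\mu})^{(u)}_{1}\ge c(T_{\lambda/\mu})^{(u)}_{2}+2$. This is where disconnectedness is essential and where I expect the real work to lie: I would prove $c(T_{\lambda/\mu})^{(u)}_{1}-c(T_{\lambda/\mu})^{(u)}_{2}\ge comp(D_{\lambda/\mu})$ by assigning to each component a bottom box of a vertical run of $P_1$ that is not the up-left shift of any vertical run of $P_2$. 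For a connected shape this difference is only $1$, the resulting content is not strictly decreasing, and the construction correctly fails, confirming that both hypotheses are genuinely needed.
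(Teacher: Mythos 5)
Your construction is exactly the paper's: the paper obtains its tableau by deleting the last box of $P_1$ (your $b_1$) and rerunning Salmasian's algorithm, and the resulting cascade along the main diagonal is precisely the reassignment $T(a_j)=j$, $T(b_j)=j+1$ that you write down, so the tableau, the content computation, and the role of disconnectedness all agree with the paper. Your reading of the hypothesis ($\mu_{\ell(\mu)}=1$, availability of $a_1$), the fitting argument, and the treatment of the marked-entry condition are correct (with the harmless caveat that for $j=z$ the down-right neighbour of $a_z$ is not an $a_{j+1}$ but the box $(\ell(\lambda),\ell(\lambda)+1)$, which is unmarked in $T_{\lambda/\mu}$ anyway).

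Two points of divergence are worth noting. First, you never verify condition (2) of Corollary \ref{checklistco} (every unmarked $k$ needs an unmarked $k-1$ strictly above it in its column) at the relocated entries: for $b_{k-1}$ this is immediate since $a_{k-1}$ sits directly above it with entry $k-1$, but for $a_k$ you need the observation that in $T_{\lambda/\mu}$ the values in any column increase by at most one per step downward, so column $\ell(\mu)+k$ contains a full run of each value $1,\dots,k$ and in particular an unmarked $k-1$ above $a_k$. This is true and short to prove, but it is a real condition of the checklist and should not be passed over in silence. Second, for the $k=2$ obstruction you aim at the global count $c^{(u)}_1-c^{(u)}_2\ge comp(D_{\lambda/\mu})$, whereas the paper gets away with the local condition (1) of Corollary \ref{checklistco}: the last box of $C_2\cap P_1$ carries an unmarked $1$ and has no box of $D_{\lambda/\mu}$ below it at all, which (together with condition (2)) already forces the strict count inequality. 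Your global inequality is correct and provable by the column-counting you sketch, but it is more work than the paper's one-line observation; also your side remark that the difference equals $1$ for connected shapes is not needed and not true in general.
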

\begin{proof}
Let $(x,x)$ be the last box of $P_1$.
We obtain a new tableau $T$ if we set $P_1' := P_1 \setminus \{(x,x)\}$ and use this instead of $P_1$ in the algorithm of Definition \ref{Salmasian'salgorithm}.
Let $P'_i := T^{(i)}$.
It is clear that $(x,x)$ is the last box of $P'_2$.
If $(x+1,x+1)$ is the last box of $P_2$ then $(x+1,x+1)$ is the last box of $P'_3$, etc.
Thus, the $P'_i$s are distinguished from the $P_i$s by at most one moved or added box.
By Corollary \ref{checklistco}, $T$ is $m$-amenable for $m > 2$.
There is a $1$ with no $2$ below in the last box of $C_2 \cap P_1$.
Thus, by Corollary \ref{checklistco}, $T$ is $2$-amenable and, hence, amenable.\par
It is clear that $c(T)_1 = \nu_1-1$ since $|P_1'| = |P_1|-1$.
The $P_i$s for all $2 \leq i \leq z$ satisfy the property that the last box is part of the main diagonal $\{(a,a) \mid a \in \NN\}$.
As mentioned above, they differ from $P'_i$s by the fact that the last box is not $(x+i-1,\linebreak
x+i-1)$ but instead $(x+i-2,x+i-2)$.
Thus, $|P'_i| = \nu_i$.
Then $(x+z-1,x+z-1)$ is the last box of $P'_{z+1}$ but since $(x+z,x+z) \notin D_{\lambda/\mu}$, it follows $|P'_{z+1}| = \nu_{z+1}+1$.
Hence, the content is $c(T) = (\nu_1-1, \nu_2, \nu_3, \ldots, \nu_z, \nu_{z+1}+1, \nu_{z+2}, \ldots)$.
\end{proof}
\begin{Ex}
For ${\Yvcentermath1 T_{\lambda/\mu} = \young(:::\meins 1,:::12,\meins 11,\meins \mzwei 2,1\mzwei 3,:2)}$ we obtain ${\Yvcentermath1 T = \young(:::\meins 1,:::12,\meins 11,1\mzwei 2,223,:3)}$.
\end{Ex}

\begin{Lem}\label{hom5}
Let $comp(D_{\lambda/\mu}) > 1$ and $\nu = c(T_{\lambda/\mu})$.
If $C_1$ has boxes above the row of the uppermost box of the leftmost column then $Q_{\lambda/\mu}$ is not $Q$-homogeneous.
\end{Lem}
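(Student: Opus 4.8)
The strategy is the one used throughout Lemmas \ref{hom1}--\ref{hom4}: I would exhibit a single amenable tableau $T$ of shape $D_{\lambda/\mu}$ whose content $\bar\nu$ differs from $\nu=c(T_{\lambda/\mu})$. By Lemma \ref{lexlargest} this forces $\bar\nu<\nu$ and $f^\lambda_{\mu\bar\nu}>0$, so $Q_{\lambda/\mu}$ has the two distinct constituents $Q_\nu$ and $Q_{\bar\nu}$ and hence is not $Q$-homogeneous. To locate the box to modify, let $(x_0,y_0)$ be the uppermost box of the leftmost column. Since the leftmost column is not met above row $x_0$ and $(x_0-1,y_0)\notin D_{\lambda/\mu}$, any box of $C_1$ in a row above $x_0$ can reach $(x_0,y_0)$ inside $C_1$ only through row $x_0$ to the right of column $y_0$; in particular row $x_0$ has more than one box, so $b:=(x_0,y_0+1)\in D_{\lambda/\mu}$, and there is at least one box of $C_1$ weakly above row $x_0$ and weakly to the right of column $y_0+1$.

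I would then build $T$ exactly as in the proof of Lemma \ref{hom4}, but deleting $b$ rather than the last box of $P_1$: run the algorithm of Definition \ref{Salmasian'salgorithm} with $P_1':=P_1\setminus\{b\}$ in place of $P_1$ and set $P_i':=T^{(i)}$. Feeding $b$ into $U_2$ pushes every box of the diagonal $R=\{(x_0+t,y_0+1+t)\}\cap D_{\lambda/\mu}$ issuing from $b$ up by one level and leaves every box off $R$ on its level, just as the removal of the diagonal box did there. Writing $m:=|R|$, the absolute values along $R$ change from $1,\dots,m$ to $2,\dots,m+1$, so
$$\bar\nu = c(T) = (\nu_1-1,\nu_2,\dots,\nu_m,\nu_{m+1}+1,\nu_{m+2},\dots),$$
which differs from $\nu$ already in its first entry; if $(x_0-1,y_0+1)\in D_{\lambda/\mu}$ its entry also switches from $1'$ to $1$, which does not affect the content. (In the disconnected basic shapes the hypothesis seems to force $R$ to have length $m=1$ and $b$ to carry an unmarked $2$ supported by a level-$1$ box directly above it, in which case the move is literally the one of Lemma \ref{hom1} and amenability is immediate from Corollary \ref{checklistco}; the description above is the version I would use to cover the remaining configurations uniformly.)

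It remains to check amenability. For $k\ge3$ this should go through as in Lemma \ref{hom4}: off $R$ nothing has changed, and along $R$ each box at level $k$ has the box at level $k-1$ as its upper-left neighbour, so condition (3) of Corollary \ref{checklistco} holds and the corollary applies. The genuinely new case is $k=2$, and this is where the hypothesis is used. Here $b$ carries a $2$ that is marked precisely when the box below it in column $y_0+1$ is a $2$; when $b$ is marked its upper-left neighbour $(x_0-1,y_0)$ lies outside $D_{\lambda/\mu}$, so $b\in\mathcal B_T^{(2)}$ and Corollary \ref{checklistco} no longer suffices. In that case I would verify Lemma \ref{checklist} directly, as was done for $k=2$ in the proof of Lemma \ref{hom2}. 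The decisive point is condition (3) at $b$: the boxes of $C_1$ singled out in the first paragraph contribute unmarked $1$'s to $\mathcal S^{\boxtimes}_{\lambda/\mu}(b)$, and there is at least one of them, so that $|\mathcal S^{\boxtimes}_T(b)^{(1)}|$ exceeds $|\mathcal S^{\boxtimes}_T(b)^{(2)}|$ strictly; the injection demanded in condition (4) is then assembled from these same boxes, while conditions (1), (2), (5), (6) are routine.

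I expect the main obstacle to be exactly this verification of Lemma \ref{checklist}(3)--(4) at $b$ for $k=2$. Unlike in Lemma \ref{hom4}, the box promoted to level $2$ sits at the very top-left of its diagonal with nothing at its upper-left, so the dominance inequality $m_1(j)\ge m_2(j)$ is tight around $b$ and can fail unless the region above and to the right of $b$ carries strictly more unmarked $1$'s than unmarked $2$'s --- which is guaranteed only by the part of $C_1$ sitting above row $x_0$, i.e. exactly by the hypothesis. Making this count precise, and confirming that the upward shift of $R$ does not disturb $k$-amenability for $3\le k\le m+1$, is the technical heart of the argument.
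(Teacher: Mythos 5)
Your overall strategy --- perturb Salmasian's algorithm to produce one amenable tableau whose content differs from $\nu=c(T_{\lambda/\mu})$ --- is exactly the paper's, but the box you delete from $P_1$ is the wrong one, and the construction collapses on precisely the configurations your parenthetical remark hopes to exclude. The point is that $b=(x_0,y_0+1)$ need not be the rightmost box of $P_1$ in row $x_0$: whenever row $x_0-1$ of $C_1$ begins in column $y_0+2$ or further right, we have $(x_0-1,y_0+1)\notin D_{\lambda/\mu}$ and hence $(x_0,y_0+2)\in P_1$. Running the algorithm with $P_1'=P_1\setminus\{b\}$ then leaves $(x_0,y_0+2)$ at level $1$ while promoting $b$ to level $2$, so row $x_0$ of the resulting filling has a $2$ or $2'$ immediately to the left of a $1$ or $1'$; it is not weakly increasing and therefore not a tableau at all, so there is nothing whose amenability one could check. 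This is not a fringe case: for $\lambda=(8,6,5,2,1)$, $\mu=(7,4)$ the diagram is basic and disconnected, $C_2=\{(1,8)\}$ is a single box, the leftmost column of $C_1$ consists of the single box $(3,3)$, and $C_1$ has boxes in row $2$, so none of Lemmas \ref{hom1}, \ref{hom2}, \ref{hom4} applies while the hypothesis of Lemma \ref{hom5} holds; here $P_1$ meets row $3$ in the boxes $(3,3),(3,4),(3,5),(3,6)$, and deleting $b=(3,4)$ yields the row $1,2,1,1,2$ (up to markings). The paper's own example for this lemma is of the same type, which also shows that your conjecture that the hypothesis forces $R$ to have length $1$ with a level-$1$ box directly above $b$ is false.

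The repair is a different choice of box, and it is what the paper does: after using Lemma \ref{hom4} to reduce to the case where the leftmost column of $C_1$ has a single box, delete the \emph{rightmost} box $(t,r)$ of $P_1$ in the lowermost row of $P_1$. The hypothesis guarantees that the last box of $P_1$ lies strictly to the left of column $r$, so $(t,r)$ is not that last box, $T^{(1)}$ remains fitting, and no box of $P_1'$ survives to the right of $(t,r)$ in its row; the modified algorithm then genuinely produces a tableau with $c(T)_1=\nu_1-1\neq\nu_1$. Amenability follows from Corollary \ref{checklistco} when $T(t,r)=2$ (since $T(t-1,r)=1$), and when $T(t,r)=2'$ one verifies Lemma \ref{checklist} using the unmarked $1$ in the last box of $C_2\cap P_1$. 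Note that the surplus of $1$'s needed for conditions (1), (3) and (4) of Lemma \ref{checklist} is supplied by the second component $C_2$, not, as in your sketch, by the part of $C_1$ above row $x_0$; that part of $C_1$ is used only to guarantee that the deleted box is not the last box of $P_1$.
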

\begin{proof}
Since Lemma \ref{hom4} states that diagrams which have more than one box in the leftmost column are not $Q$-homogeneous, it suffices to consider diagrams such that the leftmost column of $C_1$ has only one box.
Let $(t,r)$ be the rightmost box of $P_1$ in the lowermost row of $P_1$.
Note that the last box of $P_1$ is to the left of the $r^{\textrm{th}}$ column.
We obtain a new tableau $T$ if we modify the algorithm of Definition \ref{Salmasian'salgorithm} so that $P_1' := P_1 \setminus \{(t,r)\}$ is used instead of $P_1$ in the algorithm.\par
By Corollary \ref{checklistco}, the tableau $T$ is $m$-amenable for $m > 2$.
If $T(t,r) = 2$ then, by Corollary \ref{checklistco}, this tableau is $2$-amenable since $T(t-1,r) = 1$.
If $T(t,r) = 2'$ then we have $T(t-1,r-1) \neq 1'$ since $(t-1,r-1) \notin D_{\lambda/\mu}$.
However, there is a $1$ with no $2$ below it in the last box of $C_2 \cap P_1$.
Thus, by Lemma \ref{checklist}, this tableau is $2$-amenable and, hence, amenable.
Since $|P_1'| = |P_1|-1$,
the content satisfies $c(T) \neq \nu$.
\end{proof}
\begin{Ex}
For ${\Yvcentermath1 T_{\lambda/\mu} = \young(::::\meins 1,::::12,::\meins 1,111\mzwei ,:222)}$ we obtain ${\Yvcentermath1 T = \young(::::\meins 1,::::12,::11,11\mzwei 2,:223)}$.
\end{Ex}

\begin{Prop}\label{notcon}
Let $\lambda, \mu \in DP$ be such that $comp(D_{\lambda/\mu}) > 1$ and such that $D_{\lambda/\mu}$ is basic.
Then $Q_{\lambda/\mu} = k \cdot Q_{\nu}$ if and only if $k = 2$, $\lambda = (r+2,r,r-1, \ldots, 1)$, $\mu = (r+1)$ and $\nu = (r+1,r-1,r-2, \ldots, 1)$ for some $r \geq 1$.
\end{Prop}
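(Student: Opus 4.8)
The plan is to prove Proposition \ref{notcon} in two directions. For the backward direction, I would take the specific shape $\lambda = (r+2,r,r-1,\ldots,1)$, $\mu = (r+1)$ and directly compute the skew diagram $D_{\lambda/\mu}$. This diagram is disconnected because the single removed box $\mu = (r+1)$ sits in the first row, and what remains in the first row is the single box $(1, r+2)$ detached from the staircase below it. First I would identify the tableau $T_{\lambda/\mu}$ via Salmasian's algorithm (Definition \ref{Salmasian'salgorithm}) and confirm $c(T_{\lambda/\mu}) = \nu = (r+1, r-1, r-2, \ldots, 1)$, and then apply Proposition \ref{numoflexmax} to pin down the coefficient: I expect $P_1$ to have exactly two components (the isolated box in row one, plus the main body), and all subsequent $P_i$ to be connected, so that $f^{\lambda}_{\mu\nu} = 2^{comp(P_1)-1} = 2$. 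The remaining work is to show that $\nu$ is the \emph{only} constituent, i.e.\ that no other amenable tableau of any other content exists. This is where I would lean on the accumulated forbidding lemmas: I must verify that none of the obstructions in Lemmas \ref{hom1}, \ref{hom2}, \ref{hom4}, \ref{hom5} apply to this particular shape, which forces homogeneity to at least be possible, and then argue positively that $Q_{\lambda/\mu}$ has no further constituents.

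For the forward direction, suppose $Q_{\lambda/\mu} = k \cdot Q_{\nu}$ with $comp(D_{\lambda/\mu}) > 1$ and $D_{\lambda/\mu}$ basic. The strategy is to use Lemmas \ref{hom1} through \ref{hom5} as a sieve that eliminates essentially all disconnected shapes except the claimed family. Concretely, Lemma \ref{hom2} forces $comp(D_{\lambda/\mu}) = 2$, since more than two components immediately produces a second constituent $\bar\nu$. Lemma \ref{hom1} forces every component except the first to be a single box, so the second component $C_2$ is a lone box. Lemma \ref{hom4} forces the leftmost column of $C_1$ to contain exactly one box, and Lemma \ref{hom5} then forces $C_1$ to have no boxes strictly above the row containing that single leftmost box. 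Together these say: $C_1$ is a connected basic diagram whose first column has a unique box at its top, no boxes above it, and $C_2$ is a single box lying to the right.

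The heart of the forward argument, and what I expect to be the main obstacle, is translating these combined structural constraints into the exact staircase shape $\lambda = (r+2, r, r-1, \ldots, 1)$ with $\mu = (r+1)$. The constraints from the lemmas pin down the gross features of $C_1$ and the position of $C_2$, but I still need to rule out every connected $C_1$ other than the shifted staircase $D_{(r,r-1,\ldots,1)}$ and to force $C_2$ to sit in precisely the right spot (the box $(1, r+2)$). The natural tool here is Proposition \ref{lambda/n}, and in particular the special case $Q_{\lambda/(\lambda_1-1)}$, applied after recognizing that removing $\mu$ leaves a border-strip-like configuration; I would use the explicit coefficients $c^{(x,y)}_{B_\lambda}$ there to count constituents and show that any deviation from the staircase either creates an extra corner (hence an extra summand, contradicting homogeneity) or violates the basicness hypothesis. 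The delicate point is that the surviving shape must simultaneously satisfy all four forbidding lemmas \emph{and} its own orthogonal transpose must satisfy them (several lemmas are invoked up to orthogonal transposition via Lemma \ref{ot}), so I would carefully check that the staircase family is exactly the self-consistent fixed locus of these constraints, and that the coefficient $k$ collapses to exactly $2$ rather than a higher power of two because only $P_1$ disconnects.
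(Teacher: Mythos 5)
Your forward direction is essentially the paper's argument: Lemmas \ref{hom1} and \ref{hom2} reduce to two components with $C_2$ a single box, Lemmas \ref{hom4} and \ref{hom5} force $C_1$ to be a straight shifted shape $D_\alpha$, and the self-consistency under orthogonal transposition (which you correctly identify as the delicate point) forces $\alpha = (r,r-1,\ldots,1)$, hence $\lambda = (r+2,r,\ldots,1)$ and $\mu = (r+1)$. The genuine gap is in your backward direction. You compute $c(T_{\lambda/\mu})$ and the coefficient $2$ via Proposition \ref{numoflexmax}, and then propose to show uniqueness of the constituent by ``verifying that none of the obstructions in Lemmas \ref{hom1}--\ref{hom5} apply'' followed by an unspecified positive argument. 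As you yourself note, the non-applicability of the forbidding lemmas only shows that homogeneity is not excluded by those particular obstructions; it proves nothing. The positive argument is the whole point, and you never supply it. The paper closes this in one line: since $\mu = (r+1) = (\lambda_1 - 1)$, the special case $n = \lambda_1 - 1$ of Proposition \ref{lambda/n} gives the \emph{complete} decomposition of $Q_{\lambda/(\lambda_1-1)}$ as a sum over $B^{\times}_{\lambda}$, and for $\lambda = (r+2,r,\ldots,1)$ one computes $B^{\times}_{\lambda} = \{(1,r+1)\}$, so there is exactly one constituent, $\nu = (r+1,r-1,\ldots,1)$, with coefficient $2^{comp(D_{\lambda/\nu})-1} = 2$. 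This single computation settles existence, uniqueness, $\nu$, and $k$ simultaneously.

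A secondary misstep: you place Proposition \ref{lambda/n} in the forward direction, proposing to use the coefficients $c^{(x,y)}_{B_\lambda}$ to rule out non-staircase candidates for $C_1$. That proposition only applies to $Q_{\lambda/(n)}$, i.e.\ when $\mu$ is a one-row partition, and at that stage of your sieve you do not yet know $\ell(\mu) = 1$; that fact only emerges \emph{after} the orthogonal-transpose argument has pinned $C_1$ down to the staircase. So the corner-counting you describe there is either circular or inapplicable. The repair is simply to let the orthogonal-transpose consistency do all the work of identifying the shape (as it already can in your outline), and to move Proposition \ref{lambda/n} to the end, where it belongs, as the tool that proves $Q_{(r+2,r,\ldots,1)/(r+1)} = 2\,Q_{(r+1,r-1,\ldots,1)}$.
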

\begin{proof}
Let $Q_{\lambda/\mu}$ be $Q$-homogeneous and $D_{\lambda/\mu}$ be a disconnected diagram.
Lemma \ref{hom1} states that for $1 < i \leq comp(D_{\lambda/\mu})$ every component $C_i$ can consist of only one box, and Lemma \ref{hom2} states that the diagram must consist of precisely two components.
Thus, $D_{\lambda/\mu}$ has only two components $C_1, C_2$ where $C_2$ consists of a single box.
Lemma \ref{hom4} implies that the leftmost column of $C_1$ can have only one box, and Lemma \ref{hom5} yields that this box is in the uppermost row of $C_1$.
This implies that $C_1$ has shape $D_{\alpha}$ for some $\alpha \in DP$.
The same must be true for the orthogonal transpose of the diagram.
Thus, $\alpha = (r, r-1, \ldots, 1)$ for some $r \geq 1$.
Therefore, we have $\lambda = (r+2, r, r-1, \ldots, 1)$ and $\mu = (r+1) = (\lambda_1-1)$.
By Proposition \ref{lambda/n}, $B^{\times}_{\lambda} = \{(1, r+1)\}$ and we obtain $\nu = (r+1,r-1,r-2, \ldots, 1)$ and $k = f^{\lambda}_{\mu \nu} = 2$.
\end{proof}
\begin{Rem}
The case $r = 1$ in Proposition \ref{notcon} also appeared in \cite[Theorem IV.3]{DeWitt} where $Q$-homogeneous skew Schur $Q$-functions with unshifted diagrams are considered.
\end{Rem}
\begin{Ex}
For $\lambda = (6,4,3,2,1)$ and $\mu = (5)$
the following two tableaux are the only amenable tableaux of shape $D_{\lambda/\mu}$:
$${\Yvcentermath1 \young(::::\meins ,1111,:222,::33,:::4), \hspace{1ex} \young(::::1,1111,:222,::33,:::4)}.$$
\end{Ex}

\subsection{The connected case}
We have finished the disconnected case and we now consider $Q$-homogeneous skew Schur $Q$-functions indexed by a connected diagram.
The following lemmas show the non-$Q$-homogeneity of $Q_{\lambda/\mu}$ if some $P_i$ in $T_{\lambda/\mu}$ has at least two components.
This leads to Lemma \ref{coeffegone} that shows that in this case for $Q_{\lambda/\mu} = k \cdot Q_{\nu}$ we obtain $k = 1$;  thus $D_{\lambda/\mu}$ is a ``strange'' diagram in the sense of Salmasian, classified by him in \cite{Salmasian}.
Hence this provides the classification of $Q$-homogeneous skew Schur $Q$-functions indexed by a connected diagram.

\begin{Lem}\label{comps1}
Let $D_{\lambda/\mu}$ be a diagram.
Let $\nu := c(T_{\lambda/\mu})$.
Let there be some $i > 1$ such that $comp(P_i) \geq 2$ and let $C_1, \ldots, C_{comp(P_i)}$ be the components of $P_i$.
Let $(x_l,y_l)$ and $(u_l,v_l)$ be the first box and the last box of $C_l$, respectively.
If for some $j \in \{1, 2, \ldots, comp(P_i)-1\}$ we have $v_{j+1} \geq y_j+2$ then $f^{\lambda}_{\mu \tilde{\nu}} > 0$ where $\tilde{\nu} = (\nu_1, \nu_2, \ldots, \nu_{i-2}, \nu_{i-1}-1, \nu_i+1, \nu_{i+1}, \nu_{i+2}, \ldots)$.
\end{Lem}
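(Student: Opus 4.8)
The plan is to prove $f^{\lambda}_{\mu\tilde\nu}>0$ by exhibiting a single amenable tableau $T$ of shape $D_{\lambda/\mu}$ with $c(T)=\tilde\nu$; by the shifted Littlewood-Richardson rule of Stembridge cited above, $f^{\lambda}_{\mu\tilde\nu}$ is exactly the number of such tableaux. I would build $T$ from $T_{\lambda/\mu}$ by recoloring a single box from absolute value $i-1$ to absolute value $i$ and leaving the level of every other box unchanged (only markings may be adjusted). Since $T_{\lambda/\mu}^{(l)}=P_l$ for all $l$, this alters the content by $-1$ in coordinate $i-1$ and by $+1$ in coordinate $i$, so that $c(T)=\tilde\nu$ as required.

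First I would use the hypothesis to locate the box. As $v_{j+1}\ge y_j+2$, the column $v_{j+1}-1$ lies strictly to the right of every box of $C_j$ (whose rightmost column is $y_j$) and strictly to the left of every box of $C_{j+1}$ (whose leftmost column is $v_{j+1}$); hence this column meets neither $C_j$ nor $C_{j+1}$ and carries no level-$i$ box of $P_i$ in the relevant band. In this free column I would pick the box $(x,y)$ at the lower end of $C_{j+1}$ that still carries a level-$(i-1)$ entry, concretely the box immediately to the left of the last box $(u_{j+1},v_{j+1})$ of $C_{j+1}$ whenever it lies in $D_{\lambda/\mu}$, and set $T(x,y)=i$, changing the marking of at most one forced neighbour. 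Because column $v_{j+1}-1$ carries no level-$i$ entry, no second unmarked $i$ appears there, and the monotonicity and marking conditions of Definition \ref{tableaudef} are preserved, so $T$ is a legitimate tableau of content $\tilde\nu$.

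It remains to check amenability, and this is where I expect the real work to lie. For every $k\notin\{i-1,i,i+1\}$ the reading-word statistics $m_k(j)$ are untouched, so $k$-amenability is inherited from $T_{\lambda/\mu}$, which is amenable by Corollary \ref{checklistco}. For $k=i-1$ and $k=i+1$ the covering and fitting boxes relevant to Corollary \ref{checklistco} are unchanged, so these cases remain routine. The crux is $i$-amenability, which I would verify through Lemma \ref{checklist}: condition (1) persists because $\nu$ is the lexicographically largest content (Lemma \ref{lexlargest}), so the strict inequality $c^{(u)}_{i-1}>c^{(u)}_i$ survives one shift; conditions (2)--(4) reduce to comparing the numbers of level-$(i-1)$ and level-$i$ boxes inside the staircase regions $\mathcal{S}^{\boxtimes}_T(x,y)$. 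Here the gap hypothesis is decisive: the level-$(i-1)$ boxes of $P_{i-1}$ lying up and to the right of the new $i$, in particular those capping $C_{j+1}$, outnumber the level-$i$ boxes in each such region, which yields the inequalities in Lemma \ref{checklist}(2),(3) and supplies the injection $\phi$ demanded by Lemma \ref{checklist}(4). The main obstacle is precisely this bookkeeping of $\mathcal{S}^{\boxtimes}$-counts for the single newly created (and possibly marked) $i$, together with confirming that the recoloring forces at most the one marking change claimed.
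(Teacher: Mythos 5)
Your overall strategy coincides with the paper's: exhibit a single amenable tableau of content $\tilde\nu$ by promoting one box of $T_{\lambda/\mu}$ from level $i-1$ to level $i$ in the gap column $v_{j+1}-1$. But your concrete choice of box is wrong. You take $(u_{j+1},v_{j+1}-1)$, the box immediately to the left of the last box of $C_{j+1}$. This box does lie in $P_{i-1}$, but it need not be the \emph{lowermost} box of $P_{i-1}$ in that column; when it is not, writing an $i$ there places it above an entry of absolute value $i-1$ in the same column and violates condition (a) of Definition \ref{tableaudef}. This already happens in the paper's own illustration of this lemma ($\lambda=(9,8,5,3,2)$, $\mu=(6,5,2,1)$, $i=2$): there $(u_{2},v_{2})=(2,8)$, your box is $(2,7)$, but $(3,7)$ still carries a $1$, so setting $T(2,7)=2$ does not yield a tableau. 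The correct choice is the lowermost box $(s,v_{j+1}-1)$ of $P_{i-1}$ in that column, which carries an unmarked $i-1$; one must simultaneously change $(s-1,v_{j+1}-1)$ from $(i-1)'$ to $i-1$. This is the ``one forced marking change'' you allude to but never pin down, and it is what keeps the content shift equal to $-1$ in coordinate $i-1$ and $+1$ in coordinate $i$ while preserving the constraint of at most one $(i-1)'$ per row.

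Second, you leave the amenability verification as ``bookkeeping'' of $\mathcal{S}^{\boxtimes}$-counts via Lemma \ref{checklist}(2)--(4); as written this is a declared gap rather than a proof, and it is precisely the part that must be carried out. With the correct box the issue evaporates: the new $i$ at $(s,v_{j+1}-1)$ has an unmarked $i-1$ directly above it in the same column, and every marked letter still has the required diagonal predecessor (one only needs $T(s,v_{j+1})\neq i'$, which holds because $(u_{j+1},v_{j+1})$ is the last box of its component). Hence the modified tableau satisfies all hypotheses of the much simpler Corollary \ref{checklistco}, amenability is immediate, and no $\mathcal{S}^{\boxtimes}$ counting is needed.
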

\begin{proof}
Let $(u,v) = (u_{j+1},v_{j+1})$.
Then $(u-1,v-1), (u,v-1) \in P_{i-1}$.
Let $(s,v-1)$ be the lowermost box of $P_{i-1}$ in the $(v-1)^{\textrm{th}}$ column.
We obtain a new tableau $T$ if we set $T(s,v-1) = i$, $T(s-1,v-1) = i-1$ and $T(r,t) = T_{\lambda/\mu}(r,t)$ for every other box $(r,t) \in D_{\lambda/\mu}$.
If $(s,v) \in D_{\lambda/\mu}$ then $T(s,v) = T_{\lambda/\mu}(s,v) \neq i'$ and the properties in Definition \ref{tableaudef} are satisfied.
By Corollary \ref{checklistco}, the tableau $T$ is amenable.
It is clear that $c(T)_{i-1} = \nu_{i-1}-1$ and $c(T)_i = \nu_i+1$ and $c(T)_k = \nu_k$ for $k \neq i-1, i$.
\end{proof}
\begin{Ex}
For $\lambda = (9,8,5,3,2)$ and $\mu = (6,5,2,1)$ the changes are written in boldface:
$${\Yvcentermath1 \young(::\meins 11,::\meins 22,\meins 11,\meins \mzwei ,12) \rightarrow \young(::\meins 11,::\deins 22,\meins 1\dzwei ,\meins \mzwei ,12)}.$$
\end{Ex}

\begin{Lem}\label{comps2}
Let $D_{\lambda/\mu}$ be a diagram.
Let $\nu := c(T_{\lambda/\mu})$ where $\nu_j := 0$ for $j > \ell(\nu)$.
Let there be some $i > 1$ such that $comp(P_i) \geq 2$ and let $C_1, \ldots, C_{comp(P_i)}$ be the components of $P_i$.
Let $(x_l,y_l)$ and $(u_l,v_l)$ be the first box and the last box of $C_l$, respectively.
If for some $j \in \{1, 2, \ldots, comp(P_i)-1\}$ we have $v_{j+1} = y_j+1$ then $f^{\lambda}_{\mu \bar{\nu}} > 0$ where $\bar{\nu} = (\nu_1, \nu_2, \ldots, \nu_{i-2}, \nu_{i-1}-1, \nu_i, \nu_{i+1}+1, \nu_{i+2}, \nu_{i+2}, \ldots)$.
\end{Lem}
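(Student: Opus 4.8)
The plan is to mirror the proof of Lemma \ref{comps1}, adapting the local modification to the case $v_{j+1} = y_j + 1$, where the gap between consecutive components of $P_i$ is one column smaller. As in Lemma \ref{comps1}, I want to exhibit an explicit amenable tableau $T$ of shape $D_{\lambda/\mu}$ whose content is $\bar{\nu}$, which will immediately give $f^{\lambda}_{\mu \bar{\nu}} > 0$. The difference is that now the overlap forces the added entry to land one diagonal further down, so the perturbation propagates: instead of changing two boxes in row $i-1$ and $i$, the modification pushes an entry into the $(i+1)$-st level, explaining why $\bar{\nu}$ increments $\nu_{i+1}$ rather than $\nu_i$.

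First I would set $(u,v) = (u_{j+1}, v_{j+1})$, the last box of the component $C_{j+1}$, and analyze the boxes northwest of it. Since $(u,v) \in P_i$, we have $(u-1,v-1), (u,v-1) \in P_{i-1}$, and the hypothesis $v = y_j + 1$ says that $(x_j, y_j) = (x_j, v-1)$ is the first box of $C_j$, so the $(v-1)$-st column contains boxes of $P_{i-1}$ in a controlled range. I would then locate the lowermost box $(s, v-1)$ of $P_{i-1}$ in the $(v-1)$-st column and modify $T_{\lambda/\mu}$ in a small neighborhood of it, cascading the change one step further than in Lemma \ref{comps1} because the tighter overlap leaves no room to absorb the new $i$ in level $i$; it is forced down to level $i+1$. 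The new filling sets a box to $i+1$, adjusts a box to $i$ and a box to $i-1$ (or the analogous short cascade), and leaves every other box equal to $T_{\lambda/\mu}$.

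After writing down $T$ explicitly I would verify two things. First, that $T$ is a genuine tableau, i.e., it satisfies the weak-increase and column/row repetition conditions of Definition \ref{tableaudef}; this is a routine check at the handful of altered boxes, using that the neighbors inherited from $T_{\lambda/\mu}$ have the right values (in particular that the box directly southeast of the modified region is not an $i'$ or $(i+1)'$, so no forbidden repetition is created). Second, that $T$ is amenable, which I would obtain from Corollary \ref{checklistco} by checking its five combinatorial conditions at each affected level $k \in \{i-1, i, i+1\}$; the key point is that the unmarked entries I introduce sit below a strictly smaller entry in their column, and that each $T^{(k)}$ remains fitting. The content bookkeeping then reads off directly from the altered boxes, giving $c(T) = \bar{\nu}$.

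The main obstacle will be the amenability verification at the level where the cascade terminates, namely confirming condition (1) of Corollary \ref{checklistco} (existence of a $k-1$ with no $k$ below it) and the fitting conditions (4) and (5) simultaneously at level $i+1$: pushing an entry down by one diagonal risks either destroying the fitting property of $T^{(i+1)}$ or creating a column with a $k$ but no preceding $k-1$. I expect to handle this exactly as in Lemma \ref{comps2}'s companion result by arguing that the last box of $P_{i+1}$ is untouched (so $T^{(i+1)}$ stays fitting) and that the new $i+1$ inherits an $i$ above it in its column from the box $(s,v-1) = i$ that I just created. Once those local checks go through, the conclusion $f^{\lambda}_{\mu\bar\nu} > 0$ is immediate.
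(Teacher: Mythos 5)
Your construction is essentially the paper's: you perturb $T_{\lambda/\mu}$ inside the column $y_j = v_{j+1}-1$, shifting the boundary between levels so that the lowest $P_{i-1}$-box there drops to level $i$ and the lowest $P_i$-box there becomes an $i+1$, which is exactly what produces the content $\bar{\nu}$. (The paper writes your ``short cascade'' explicitly as $T(a,y)=T_{\lambda/\mu}(a+1,y)$ for $t-1\le a\le s-1$, together with $T(s,y)=i+1$ or $(i+1)'$ according to whether $(s+1,y)\in P_{i+1}$.)

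There is, however, a genuine gap in your verification plan: you propose to certify amenability at the affected levels by checking the five conditions of Corollary \ref{checklistco}, and this cannot succeed. After the shift, the box that used to be the lowest box of $P_{i-1}$ in column $y_j$ will in general carry an $i'$ whose diagonal predecessor is not $(i-1)'$ (it lies in some $P_m$ with $m\le i-2$, or outside the diagram), and the new entry at the bottom of the column is $(i+1)'$ whenever $(s+1,y_j)\in P_{i+1}$, with no $i'$ diagonally above it; both situations violate condition (3) of Corollary \ref{checklistco}, so the corollary simply does not apply at levels $i$ and $i+1$. This is precisely why the paper falls back on the full Lemma \ref{checklist} there, producing the data required by its conditions (3) and (4) from an $(i-1)'$ in column $y_j-1$ with no $i'$ below-right of it, and from the unmarked $i$ at $(u_{j+1},y_j+1)$ together with the absence of any $i+1$ in that column. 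Your ``main obstacle'' paragraph instead worries about conditions (1), (4) and (5) of the corollary and resolves only the unmarked case (a new $i+1$ sitting under an unmarked $i$); the marked cases, which are the delicate ones, are left unaddressed. To close the argument you must invoke Lemma \ref{checklist} rather than its corollary at levels $i$ and $i+1$ and carry out those two checks, and you should also make the case distinction for the entry placed at $(s,y_j)$ explicit, since it affects both the tableau conditions and which amenability clause is at stake.
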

\begin{proof}
Let $(x,y) = (x_j,y_j)$ and $(u,y+1) = (u_{j+1},v_{j+1})$.
Then $x > u$ and we have $(x-1,y), (x-2,y) \in P_{i-1}$.
Let $(s,y)$ be the lowermost box of $P_i$ in the $y^{\textrm{th}}$ column and let $t$ be such that $T_{\lambda/\mu}(t,y) = i-1$.
We obtain a new tableau $T$ if we set $T(a,y) = T_{\lambda/\mu}(a+1,y)$ for $t-1 \leq a \leq s-1$, $T(s,y) = (i+1)'$ if $(s+1,y) \in P_{i+1}$ or $T(s,y) = i+1$ if $(s+1,y) \notin P_{i+1}$, and $T(e,f) = T_{\lambda/\mu}(e,f)$ for every other box $(e,f) \in D_{\lambda/\mu}$.
If $(x-1,y+1) \in D_{\lambda/\mu}$ then $T_{\lambda/\mu}(x-1,y+1) \neq i'$, otherwise $T_{\lambda/\mu}(x,y+1) = i$ and the boxes of $C_k$ and $C_{k+1}$ are in the same component.\par
By Corollary \ref{checklistco}, $T$ is $m$-amenable for $m \neq i, i+1$.
There is possibly some $b$ such that $T(b,y) = i'$ and $T(b-1,y-1) \neq (i-1)'$.
However, there is some $c \geq b$ such that $T(c,y-1) = (i-1)'$ and $T(c+1,y) \neq i'$.
Thus, by Lemma \ref{checklist}, $i$-amenability follows.
We possibly have $T(s,y) = (i+1)'$ and $T(s-1,y-1) \neq i'$.
However, we have $T(u,y+1) = i$ and there is no $i+1$ in the $(y+1)^{\textrm{th}}$ column.
Hence, by Lemma \ref{checklist}, $(i+1)$-amenability follows.
It is clear that $c(T)_{i-1} = \nu_{i-1}-1$ and $c(T)_{i+1} = \nu_{i+1}+1$ and $c(T)_j = \nu_j$ for $j \neq i-1, i+1$.
\end{proof}
\begin{Ex}
For $\lambda = (11,10,9,5,4,3,2)$ and $\mu = (7,6,4,3)$ the changes are written in boldface:
$${\Yvcentermath1 \young(:::\meins 111,:::\meins \mzwei 22,::\meins 1233,::\meins \mzwei,111\mzwei ,:222,::33) \rightarrow \young(:::\meins 111,:::\deins \mzwei 22,::\meins \dmzwei 233,::\meins \dmzwei,111\dzwei ,:22\dmdrei ,::33)}.$$
\end{Ex}

If a diagram is connected then this implies that $P_1$ must be connected.
If in a connected diagram there is some $P_i$ where $i > 1$ that consists of at least two components then we can apply one of the two previous lemmas to show that the corresponding skew Schur $Q$-function is not $Q$-homogeneous.
Thus, for the $Q$-homogeneous skew Schur $Q$-functions the diagram $P_i$ must be connected for all $i$.
Using Lemma \ref{numoflexmax} we obtain the following lemma.

\begin{Lem}\label{coeffegone}
Let $Q_{\lambda/\mu} = k \cdot Q_{\nu}$ for some $k$.
If $comp(D_{\lambda/\mu}) = 1$ then $k = 1$.
\end{Lem}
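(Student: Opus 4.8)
The plan is to combine the structural fact established just before the statement---that $Q$-homogeneity of a connected $D_{\lambda/\mu}$ forces every $P_i = T_{\lambda/\mu}^{(i)}$ to be connected---with the explicit coefficient formula of Proposition \ref{numoflexmax}. Write $\nu := c(T_{\lambda/\mu})$, the lexicographically largest content. By Lemma \ref{lexlargest} this is the only content that can appear as the index of the Schur $Q$-function on the right, so if $Q_{\lambda/\mu} = k\cdot Q_{\nu'}$ then necessarily $\nu' = \nu$ and $k = f^{\lambda}_{\mu\nu}$.

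First I would invoke the preceding discussion: since $D_{\lambda/\mu}$ is connected, $P_1$ is connected, and if some $P_i$ with $i>1$ had at least two components, then (after possibly passing to the orthogonal transpose, which by Lemma \ref{ot} preserves $Q_{\lambda/\mu}$) one of Lemma \ref{comps1} or Lemma \ref{comps2} would apply to the two adjacent components $C_j, C_{j+1}$, producing an amenable tableau of content $\tilde\nu \neq \nu$ with $\tilde\nu < \nu$; this would give a second constituent $Q_{\tilde\nu}$ with positive coefficient, contradicting $Q$-homogeneity. Hence under the hypothesis $Q_{\lambda/\mu} = k\cdot Q_\nu$ every $P_i$ is connected, i.e. $comp(P_i) = 1$ for all $i$.

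Then I would simply feed this into Proposition \ref{numoflexmax}, which gives
$$k = f^{\lambda}_{\mu\nu} = \prod_{i=1}^{\ell(\nu)} 2^{comp(P_i)-1} = \prod_{i=1}^{\ell(\nu)} 2^{0} = 1,$$
completing the proof. The only point requiring care is the dichotomy in Lemma \ref{comps1} versus Lemma \ref{comps2}: for a disconnected $P_i$ the relative position of adjacent components is governed by whether $v_{j+1} \geq y_j + 2$ or $v_{j+1} = y_j+1$, and one must check that these two cases are genuinely exhaustive for a pair of horizontally adjacent components of a $P_i$ arising in Salmasian's algorithm. This is the main (though minor) obstacle; since every component of $P_i$ is a border strip and the diagonals are strictly increasing, the first box of $C_{j+1}$ and the last box of $C_j$ cannot overlap in column index by more than what these two inequalities cover, so one of the two lemmas always applies, and the argument goes through.
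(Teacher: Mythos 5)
Your argument is correct and is essentially the paper's own: the paper likewise deduces from Lemmas \ref{comps1} and \ref{comps2} (whose hypotheses $v_{j+1}\geq y_j+2$ and $v_{j+1}=y_j+1$ are indeed exhaustive) that every $P_i$ must be connected, and then reads off $k=f^{\lambda}_{\mu\nu}=\prod_i 2^{comp(P_i)-1}=1$ from Proposition \ref{numoflexmax}. The appeal to the orthogonal transpose is harmless but unnecessary here.
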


\begin{Rem}
As mentioned above, Salmasian \cite{Salmasian} classified the ``strange'' diagrams $D_{\lambda/\mu}$  which are the ones satisfying $Q_{\lambda/\mu} = Q_{\nu}$ for some $\nu \in DP$.
Thus, the $Q$-homogeneous skew Schur $Q$-functions to connected diagrams
are those on the classification list in \cite[Theorem 3.2]{Salmasian}.
\end{Rem}

We have now completed the classification of the $Q$-homogeneous skew Schur $Q$-functions.
Since we are also interested in the only constituent of the decomposition, before we state our main Theorem \ref{homogeneous}, we need the following definition and lemma.

\begin{Def}
Let $\lambda = (\lambda_1, \lambda_2, \ldots, \lambda_{\ell(\lambda)}) \in DP$.
Let $\mu = (\lambda_{i_1}, \lambda_{i_2}, \ldots, \lambda_{i_{\ell(\mu)}})$ for $\{i_1, i_2, \ldots, i_{\ell(\mu)}\} \subseteq \{1, 2, \ldots, \ell(\lambda)\}$.
Then $\lambda \setminus \mu$ is defined as the partition obtained by removing the parts of $\mu$ from $\lambda$.
\end{Def}
\begin{Ex}
For $\lambda = (9,7,5,4,3,1)$ and $\mu = (5,3,1)$ we obtain $\lambda \setminus \mu = (9,7,4)$.
\end{Ex}

\begin{Lem}\label{caseb}
If $\lambda = (a, a-1, \ldots, 1)$ and $\mu$ is arbitrary then $Q_{\lambda/\mu} = Q_{\lambda \setminus \mu}$.
\end{Lem}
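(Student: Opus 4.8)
The statement to prove is: if $\lambda = (a, a-1, \ldots, 1)$ is the ``staircase'' partition and $\mu \in DP$ is arbitrary (with $D_\mu \subseteq D_\lambda$, else both sides are $0$), then $Q_{\lambda/\mu} = Q_{\lambda \setminus \mu}$. The plan is to exhibit a content-preserving bijection between tableaux of shape $D_{\lambda/\mu}$ and tableaux of shape $D_{\lambda \setminus \mu}$, which immediately yields equality of the generating functions. The key geometric observation is that for the staircase $\lambda_i = a - i + 1$, so the row lengths decrease by exactly $1$ at each step, and the shifting that defines $D_\lambda$ means each row starts one column further right than the row above; consequently $D_\lambda$ is itself a ``staircase triangle'' whose columns and rows interact in a very rigid way.

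**The main idea: deletion of $\mu$-rows collapses to a smaller staircase.**
First I would analyze $D_{\lambda/\mu}$ concretely. Since $\mu \in DP$ and $\mu_{i_1} > \mu_{i_2} > \cdots$, removing $D_\mu$ from the staircase $D_\lambda$ removes, from the top rows, initial segments whose lengths are themselves among the values $a, a-1, \ldots$. I would argue that because $\lambda$ is the staircase, the rows of $D_{\lambda/\mu}$ that survive (after deleting empty rows and columns via the basic-reduction discussed in the Preliminaries, using $Q_D = Q_{\bar D}$) reassemble \emph{exactly} into the shifted diagram $D_{\lambda \setminus \mu}$. The cleanest route is to show that $D_{\lambda/\mu}$, after removing empty rows and columns, equals $D_{\lambda \setminus \mu}$ as an abstract diagram; then $Q_{\lambda/\mu} = Q_{\overline{D_{\lambda/\mu}}} = Q_{\lambda \setminus \mu}$ follows formally with no tableau combinatorics at all. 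This reduces the whole lemma to a purely diagrammatic identity.

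**Carrying out the diagram identity.**
I would proceed by induction on $\ell(\mu)$. For the base case $\mu = \emptyset$ there is nothing to prove. For the inductive step, let $\mu_1 = \lambda_{i_1}$ be the largest part removed; I would track what deleting the cells of row $i_1$ (of length $\lambda_{i_1}$) does inside the staircase. The crucial staircase feature is that row $i$ of $D_\lambda$ occupies columns $i$ through $a$, so row $i_1$ occupies columns $i_1$ through $a$, i.e.\ precisely a full ``right-justified'' strip; deleting $D_\mu$ one row at a time, I would check that removing such a strip and then removing the resulting empty row/column yields the staircase shape of $\lambda$ with the part $\lambda_{i_1}$ deleted, compatibly with the recursive structure of $\lambda \setminus \mu$. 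The verification that after each deletion the leftover boxes still shift into a legitimate shifted staircase diagram (so that the hypotheses of the empty-row/empty-column reduction apply) is the step that needs care: I must confirm that no two surviving rows ever overlap in a way that breaks the strict-decrease condition, which is exactly where $\lambda$ being the \emph{full} staircase (consecutive integers $a, a-1, \ldots, 1$) is used rather than a general distinct-parts partition.

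**Anticipated obstacle.**
The main obstacle is bookkeeping the coordinates through the empty-row and empty-column removals so that the identity $\overline{D_{\lambda/\mu}} = D_{\lambda \setminus \mu}$ is genuinely an equality of shifted diagrams, not merely of multisets of row lengths. In particular I must ensure the reindexing of rows and the leftward/upward shifts compose correctly under the induction, and that the ``$\lambda$ is a staircase'' hypothesis is invoked precisely to guarantee each intermediate shape remains a staircase (so the parts stay distinct and consecutive). Once this diagrammatic equality is established, the conclusion $Q_{\lambda/\mu} = Q_{\lambda \setminus \mu}$ is immediate from $Q_D = Q_{\bar D}$, and no appeal to amenability, the Littlewood--Richardson rule, or the orthogonal transpose is needed.
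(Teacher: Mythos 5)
There is a genuine gap: your central claim that $\overline{D_{\lambda/\mu}} = D_{\lambda\setminus\mu}$ is false, and the whole argument rests on it. For the staircase $\lambda=(a,a-1,\ldots,1)$ every row of $D_\lambda$ ends in column $a$, so after deleting the initial segments prescribed by $\mu$ the surviving boxes of each row are \emph{right}-justified at column $a$. The resulting shape is therefore a ``column-like'' configuration --- in fact the orthogonal transpose of $D_{\lambda\setminus\mu}$ --- and not the left-justified shifted diagram $D_{\lambda\setminus\mu}$. The empty-row/empty-column reduction can never repair this, since it only deletes genuinely empty rows and columns and preserves the relative positions of the remaining boxes. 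Concretely, take $\lambda=(3,2,1)$ and $\mu=(2)$: then $D_{\lambda/\mu}$ consists of the boxes $(1,3),(2,2),(2,3),(3,3)$, is already basic (no empty rows or columns can be removed in the sense of the paper's definition), and is visibly not equal to $D_{(3,1)}=\{(1,1),(1,2),(1,3),(2,2)\}$; the two shapes are orthogonal transposes of one another. So the lemma cannot be reduced to a purely diagrammatic identity, and your closing remark that no appeal to the orthogonal transpose is needed is exactly where the proof breaks.

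The paper's proof instead observes that $D_{\lambda/\mu}^{ot}=D_\alpha$ for some straight shape $\alpha$ and invokes Lemma \ref{ot}, which supplies a content-preserving bijection between tableaux of $D$ and of $D^{ot}$; that bijection is not free --- it composes the naive reflection (which reverses the content) with a second bijection obtained from the symmetry of $Q_{\lambda/\mu}$. Only after this step does the paper identify $\alpha=\lambda\setminus\mu$, by an induction on the staircase (peeling off the first row if $(1,a)\in D_\mu$, or the $a^{\textrm{th}}$ column otherwise) that is close in spirit to the bookkeeping induction you sketch. To repair your argument you would need to insert the orthogonal-transpose step (or some equivalent bijection) before the induction; the induction alone only identifies which parts survive, not why the skew and straight $Q$-functions agree.
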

\begin{proof}
The diagram $D_{\lambda/\mu}^{ot}$ is equal to $D_{\alpha}$ for some $\alpha \in DP$.
Thus, by Lemma \ref{ot}, we have $Q_{\lambda/\mu} = Q_{D_{\lambda/\mu}^{ot}} = Q_{\alpha}$.\par
We obtain $\alpha$ as follows.
We will show that for all $1 \leq  k \leq a$ the number $k$ is either a part of $\alpha$ or a part of $\mu$  but it is never a part of both partitions.
For this proof only, we will not assume that $D_{\lambda/\mu}$ is basic.
This means that in this proof it is possible to have $\lambda_1 = \mu_1$.\par
The statement clearly holds for $\lambda = (1)$.
Let $\lambda = (a, a-1, \ldots, 1)$ with $a > 1$ and consider $D_{\lambda/\mu}$.\par
Case 1: $(1,a) \in \mu$.\par
Then $\mu_1 = a$ and the $a^{\textrm{th}}$ column of $D_{\lambda/\mu}$ has at most $a-1$ boxes.
Thus, $\alpha_1 < a$.
Let $U$ be the diagram obtained by removing the boxes of the first row.\par
Case 2: $(1,a) \notin \mu$.\par
Then $\mu_1 < a$ and the $a^{\textrm{th}}$ column of $D_{\lambda/\mu}$ has precisely $a$ boxes.
Thus, $\alpha_1 = a$.
Let $U$ be the diagram obtained by removing the boxes of the $a^{\textrm{th}}$ column.\par
In both cases we have $U = D_{\gamma/\beta}$ for $\gamma = (a-1,a-2, \ldots, 1)$ and some $\beta$.
By induction the statement follows.
\end{proof}
\begin{Ex}\label{casebex}
For $\lambda = (5,4,3,2,1)$ and $\mu = (5,3,2)$ the diagram is
$${\Yvcentermath1 \young(\times \times \times \times \times ,:\times \times \times \none ,::\times \times \none ,:::\none \none ,::::\none )},$$
where ${\Yvcentermath1 \young(\times )}$ denotes a box from $D_{\mu}$.\par
We want to calculate the partition $\alpha$ appearing in the equation $Q_{\lambda/\mu} = Q_{D_{\lambda/\mu}^{ot}} = Q_{\alpha}$.
Since $(1,5) \in D_{\mu}$, there cannot be 5 boxes in the first row of $D_{\lambda/\mu}^{ot} = D_{\alpha}$.
Thus, there is a part $5$ in $\mu$ but not in $\alpha$.
After removing the boxes of the first row we obtain
$${\Yvcentermath1 \young(\times \times \times \none ,:\times \times \none ,::\none \none ,:::\none )}.$$
We have $(1,4) \notin D_{\mu}$ and, thus, there is no part $4$ in $\mu$ but a part $4$ in $\alpha$.
After removing the fourth column we obtain
$${\Yvcentermath1 \young(\times \times \times ,:\times \times ,::\none )}.$$
We have $(1,3) \in D_{\mu}$ and, thus, there is no part $3$ in $\alpha$ but in $\mu$.
After removing the boxes of the first row we obtain
$${\Yvcentermath1 \young(\times \times ,:\none )}.$$
We have $(1,2) \in D_{\mu}$ and, thus, there is no part $2$ in $\alpha$ but in $\mu$.
After removing the boxes of the first row we obtain
$${\Yvcentermath1 \young(\none )}.$$
We have $(1,1) \notin D_{\mu}$ and, thus, there is no part $1$ in $\mu$ but a part $1$ in $\alpha$.\par
We obtain $\alpha = (4,1) = (5,4,3,2,1) \setminus (5,3,2)$.
\end{Ex}

\begin{Th}\label{homogeneous}
Let $\lambda, \mu \in DP$ such that $D_{\lambda/\mu}$ is basic.
We have $Q_{\lambda/\mu} = k \cdot Q_{\nu}$ if and only if $\lambda$, $\mu$, $\nu$ and $k$ satisfy one of the following properties:
\begin{enumerate}[(i)]
	\item $\lambda$ arbitrary, $\mu = \emptyset$ and $\nu = \lambda$ and $k = 1$,
	\item $\lambda = (r, r-1, \ldots, 1)$ and $0 < \ell(\mu) < r-1$ for some $m$ and $\nu = \lambda \setminus \mu$ and $k = 1$,
	\item $\lambda = (p+q+r, p+q+r-1, p+q+r-2, \ldots, p)$, $\mu = (q, q-1, \ldots, 1)$, where $p, q, r \geq 1$ and $\nu = (p+r+q, p+r+q-1, p+r+q-2, \ldots, p+q+1, p+q,\linebreak
	p+q-2, p+q-4, \ldots, \max\{p-q, q+2-p\})$ and $k = 1$,
	\item $\lambda = (p+q, p+q-1, p+q-2, \ldots, p+1, p)$, $\mu = (q, q-1, \ldots, 1)$, where $p, q \geq 1$ and $\nu = (p+q, p+q-2, p+q-4, \ldots, \max\{p-q, q-p+2\})$ and $k = 1$,
	\item $\lambda = (r+2,r,r-1, \ldots, 1)$, $\mu = (r+1)$ and $\nu = (r+1,r-1,r-2, \ldots, 1)$ for some $r \geq 1$ and $k = 2$.
\end{enumerate}
\end{Th}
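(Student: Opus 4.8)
The plan is to treat the connected and the disconnected cases separately, since the two subsections above have already reduced each of them to results we may now simply invoke. If $comp(D_{\lambda/\mu}) > 1$, then Proposition \ref{notcon} is exactly the statement we want: $Q_{\lambda/\mu} = k \cdot Q_\nu$ holds if and only if $k = 2$ and $(\lambda, \mu, \nu)$ has the form in item (v). This disposes of the disconnected case entirely, and it is the only item of the theorem in which $k \neq 1$.

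For the connected case $comp(D_{\lambda/\mu}) = 1$, Lemma \ref{coeffegone} forces $k = 1$, so that $Q$-homogeneity is equivalent to the equality $Q_{\lambda/\mu} = Q_\nu$ for a single $\nu \in DP$; these are precisely Salmasian's ``strange'' diagrams. I would therefore read the admissible pairs $(\lambda, \mu)$ off his classification \cite[Theorem 3.2]{Salmasian} and verify that, among basic diagrams, the connected strange diagrams are exactly those appearing in items (i)--(iv). (There is no conflict with any disconnected entry on Salmasian's list: by Proposition \ref{notcon} a basic disconnected homogeneous diagram forces $k = 2$, so no basic disconnected diagram can satisfy $Q_{\lambda/\mu} = Q_\nu$.) Item (i) is the tautology $\mu = \emptyset$.

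It then remains to identify the unique constituent $\nu$ in each connected case. Since $k = 1$, Lemma \ref{lexlargest} shows that $\nu$ is the lexicographically largest content $c(T_{\lambda/\mu})$, so $\nu$ can be read from the tableau built by the algorithm of Definition \ref{Salmasian'salgorithm}. For the staircase shape $\lambda = (r, r-1, \ldots, 1)$ of item (ii), Lemma \ref{caseb} gives directly $Q_{\lambda/\mu} = Q_{\lambda \setminus \mu}$, hence $\nu = \lambda \setminus \mu$; the bound $0 < \ell(\mu) < r-1$ is what keeps $D_{\lambda/\mu}$ connected, basic, and disjoint from item (v). For item (iv) the diagram $D_{\lambda/\mu}$ is a $(q+1) \times p$ rectangle, and the successive sets $P_1, P_2, \ldots$ of Definition \ref{Salmasian'salgorithm} are the nested upper-left hooks of this rectangle, of sizes $p+q,\, p+q-2,\, p+q-4, \ldots$, terminating at $\max\{p-q,\, q-p+2\}$ according to whether the left edge or the top edge of the shrinking rectangle is exhausted first. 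For item (iii) the same count applies, except that the staircase tail of $\lambda$ (absent when $r = 0$) feeds one extra box into each of the first $r+1$ sets $P_i$, producing the initial run $p+q+r,\, p+q+r-1, \ldots, p+q$ that decreases by $1$ before the ``step by $2$'' pattern takes over.

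Finally I would check the converse, that each listed quadruple really yields $Q_{\lambda/\mu} = k \cdot Q_\nu$: for (i) it is immediate, for (ii) it is Lemma \ref{caseb}, for (v) it is Proposition \ref{notcon}, and for (iii) and (iv) it is the box count above combined with Salmasian's equality $Q_{\lambda/\mu} = Q_\nu$. I expect the real work --- the main obstacle --- to be the bookkeeping of the third paragraph: matching Salmasian's case division onto items (i)--(iv) with no gaps or overlaps (staircases with $p = 1$ occur in both (ii) and (iii)/(iv) and must be reconciled), and carrying out the hook counts precisely enough to recover both the length of the initial ``step by $1$'' run and the exact tail value $\max\{p-q,\, q-p+2\}$.
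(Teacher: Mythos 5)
Your proposal is correct and follows essentially the same route as the paper: Proposition \ref{notcon} settles the disconnected case (item (v)), Lemma \ref{coeffegone} forces $k=1$ in the connected case so that the shapes are read off Salmasian's classification of ``strange'' diagrams, and the constituent $\nu$ is identified via Lemma \ref{caseb} for (ii) and via the hook structure of the sets $P_i$ for (iii) and (iv). Your added bookkeeping (the $(q+1)\times p$ rectangle, the ``step by $2$'' hook sizes ending at $\max\{p-q,\,q-p+2\}$, and the consistency check between the connected and disconnected lists) is accurate and in fact more explicit than the paper's own very terse argument.
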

\begin{proof}
Case (i) is trivial.
For the cases (ii), (iii) and (iv) the proof of homogeneity is the main work of \cite{Salmasian} and will not be repeated here.
In case (ii), by Lemma \ref{caseb}, we have $Q_{\lambda/\mu} = Q_{(\lambda/\mu)^{ot}} = Q_{\alpha}$ for $D_{\alpha} = D_{\lambda \setminus \mu}$.
The partition $\nu$ for the cases (iii) and (iv) are easy to deduce since each $P_i$ is a hook.
Case (v) was shown in Proposition \ref{notcon}.
\end{proof}

\begin{Ack}
The QF package for Maple made by John Stembridge (\url{http://www.math.lsa.umich.edu/~jrs/maple.html}) was a helpful tool for analysing the decomposition of skew Schur $Q$-functions.\par
This paper is based on the research I did for my master's thesis and my PhD thesis which were supervised by Prof. Christine Bessenrodt.
I am very grateful to Christine Bessenrodt that she introduced me to (skew) Schur $Q$-functions, and I would like to thank her for her help in writing this paper as well as for supervising my research.
\end{Ack}

\end{document}